\documentclass[journal,twoside,web]{IEEEtran}

 \usepackage{balance}   
\usepackage{mathtools}
\usepackage{xcolor} 
\usepackage{amsmath,amssymb,bbm}
\usepackage{amsthm}

\usepackage{cite}
\usepackage{enumitem}
\usepackage{cancel}
\usepackage{graphicx}
\usepackage{subcaption}
\usepackage{pifont}
\usepackage{xspace}
\usepackage{makecell}
\usepackage{mathrsfs}
\usepackage{mathtools}
\newtheorem{assumption}{Assumption}
\newtheorem{lemma}{Lemma}

\newtheorem{remark}{Remark}

\newtheorem{theorem}{Theorem}

\newtheorem{definition}{Definition}

\newcommand{\CommentState}[1]{\Statex\hspace{\algorithmicindent}{\color{blue}// #1}}

\newcommand{\calS}{\mathcal{S}}
\newcommand{\calA}{\mathcal{A}}
\newcommand{\calO}{\mathcal{O}}
\newcommand{\calP}{\Delta} % probability simplex over a set
\newcommand{\Nat}{\mathbb{N}}

\allowdisplaybreaks % allow align environment to distribute over multiple pages

\usepackage{algorithm}
\usepackage{algpseudocode}
\algrenewcommand\algorithmicrequire{\textbf{Input:}}
\algrenewcommand\algorithmicensure{\textbf{Output:}}
\makeatletter\@mparswitchfalse\makeatother\normalmarginpar % to force the notes to the right
\usepackage[textwidth=1cm, textsize=scriptsize]{todonotes}

\def\BibTeX{{\rm B\kern-.05em{\sc i\kern-.025em b}\kern-.08em
    T\kern-.1667em\lower.7ex\hbox{E}\kern-.125emX}}
\title{A Decentralized Partially Observable Team Decision Methodology with Delayed Information Sharing}

\author{%
	Xiaoxing~Ren, Thomas~Parisini,  Andreas A. Malikopoulos
\thanks{Xiaoxing Ren is with the School of Civil and Environmental
Engineering, Cornell University, Ithaca, NY,
USA. E-mail: {\tt \small xr49@cornell.edu} }%
 \thanks{Thomas Parisini is with  the Dept. of Electrical and Electronic Engineering,
Imperial College London, London SW7 2AZ, UK, and also with the Dept. of Electronic Systems, Aalborg University, Denmark, and with the  Dept. of Engineering and Architecture, University of Trieste, Italy. E-mail: {\tt\small t.parisini@imperial.ac.uk}}
     \thanks{Andreas A. Malikopoulos is with Cornell Robotics, Center for Applied Mathematics, Systems Engineering program, School of Electrical \& Computer Engineering, Sibley School of Mechanical \& Aerospace Engineering, and the School of Civil \& Environmental Engineering, Cornell University, Ithaca, NY, USA.  E-mail: {\tt\small amaliko@cornell.edu} }
}

\begin{document}

\maketitle

\begin{abstract}
% In this paper, we study the decentralized partially observable team problem under non-classical information sharing. In particular, we focus on the $\gamma$-observable POMDPS, and use representation learning to let members learn a function that approximate the system dynamic. Under 

% We study decentralized partially observed team decision problems with structured latent dynamics. The considered formulation combines a team-theoretic equivalence framework with a low-rank representation of the underlying model. In this setting, each member makes decisions based on its local private data together with delayed common data shared across the team.
% Based on its local private data and the delayed common data, each member constructs an approximate low-rank Markov decision process and applies least-squares value iteration to compute its policy. Under suitable structural conditions, we show that the resulting member-side solution approximates the original centralized team problem. In particular, each member recovers the corresponding component of an approximate team-optimal solution, despite the partial observability and the delay in the common information. 

%Furthermore, the proposed method achieves a sample complexity of [\emph{to be specified}].

We study decentralized partially observable team decision problems with low-rank latent dynamics and unknown system models. The proposed framework combines team-theoretic equivalence with low-rank model representations to address cooperative decision-making in partially observable Markov decision processes without prior knowledge of the transition model. Each team member makes decisions based on local private information and delayed common information shared across the team. Using only this available information, each member learns an approximate low-rank Markov decision process and applies least-squares value iteration to compute its policy. This yields a fully decentralized learning and planning algorithm that requires neither a centralized coordinator nor centralized training. We show that the resulting member-side solutions approximate the centralized team solution: despite partial observability, unknown dynamics, and delayed common information, each member recovers the corresponding component of an approximate team-optimal policy. We further establish finite-sample performance guarantees and derive a corresponding sample-complexity bound for the proposed algorithm.

\end{abstract}

\begin{IEEEkeywords}
Reinforcement learning, decentralized control, POMDP, representation learning, team theory.
\end{IEEEkeywords}

\section{Introduction}
Multi-agent reinforcement learning (MARL) has emerged as an important methodology for sequential decision-making in networked multi-agent systems, with applications in robotic coordination, autonomous driving, wireless networks, and distributed control  \cite{Sumanth2021,sun2025recommendation,zhao2019deep,duan2016benchmarking}. A key difficulty in MARL is coordinating multiple agents that interact with a common environment while making decisions based on decentralized information. Over the past few years, extensive work has been conducted on this problem across various training and execution architectures. Among them, centralized training with decentralized execution (CTDE) has become one of the most widely adopted paradigms, where a centralized module exploits global information during training, while each agent executes its policy using only local observations \cite{amato2024introduction,sunehag2017value,rashid2020weighted,lowe2017multi}. Closely related structures, including centralized evaluation and value decomposition schemes, have also been developed to improve coordination and scalability in cooperative settings. On the algorithmic side, actor-critic, policy-gradient, and value-based methods have all been extended to the multi-agent setting, leading to a broad family of practical MARL algorithms~\cite{hairi2022finite,mao2025decentralized,chen2024decentralized}.

In many applications, however, noisy and limited sensors will prevent the agents from directly observing the underlying system state. This significantly increases the difficulty of decision-making, since the current observation is, in general, insufficient to embed all information relevant for optimal control. 
In order to represent the imperfect observation, such problems are commonly modeled as partially observable Markov decision processes (POMDPs)
% , where the agent must reason over hidden states through its observation--action history 
\cite{aastrom1965optimal,oliehoek2016concise,chen2016pomdp,hauskrecht2000planning}. 
In a POMDP, observation uncertainty induces non-Markovian dependence among successive observations. Therefore, agents need to maintain a belief state, also known as an information state, which summarizes the history by estimating the probability distribution over the latent states and is sufficient for predicting future states and rewards.
However, performing a Bayesian update of the belief requires knowledge of the model, which is not available in MARL settings. More importantly, the belief is a density over the latent state space and is therefore an infinite-dimensional object whenever the state space is continuous. Even in the finite-state case, the set of beliefs that are reachable from distinct histories grows exponentially with the horizon, leading to an intractable representation complexity~\cite{jin2020sample}.

A widely adopted heuristic approach is to extend RL algorithms based on Markov decision process (MDP)
% MDP-based RL algorithms 
by incorporating a sliding window of historical observations to encode the policy or value function, typically implemented with recurrent neural networks~\cite{wierstra2007solving,hausknecht2015deep}.
Several research efforts also construct an approximate information state that compresses the history into a tractable representation for decision-making, and analyze the performance bounds using a corresponding approximate dynamic program with a bounded loss of optimality \cite{subramanian2019approximate,subramanian2022approximate}. 
Another line of work uses representation learning to extract latent features from trajectories or histories to learn a compact representation that is sufficient, or approximately sufficient, to capture the belief \cite{zhang2024provable,uehara2021representation,agarwal2020flambe,efroni2022provable,modi2024model,gao2025spectral,guo2023provably}. 

The challenge increases greatly in the multi-agent reinforcement learning (MARL) setting, where the model, observation kernel, and reward function are unknown. In this case, one must simultaneously address the difficulties caused by partial observation, decentralized information, and unknown environment dynamics. As discussed, existing approaches to multi-agent partial observation typically still rely on some centralized structure, either explicitly or implicitly. In particular, many research efforts on partially observed MARL continue to follow the CTDE paradigm, where a centralized critic, a centralized mixing network, or a centralized state estimator is introduced during training \cite{liu2023partially,liu2026partially,cai2024provable,ni2022representation}. 
Another important line of research in decentralized control and Dec-POMDP is the common-information-based framework, often expressed through prescriptions \cite{subramanian2019approximate,subramanian2022approximate,nayyar2013decentralized,kao2022common}. In this framework, a virtual coordinator observes the common information and selects prescription functions that map each agent's private local information to its local action. This reformulation allows the decentralized control problem to be analyzed as a centralized decision problem from the coordinator's perspective, while preserving decentralized execution at the agent level~\cite{nayyar2013decentralized}.

%
% And for the partial observability case, the approximate information state, which avoids the exact updates of beliefs, has been utilized in \cite{subramanian2019approximate,subramanian2022approximate} to analyze the performance bounds using a corresponding approximate dynamic program with a bounded loss of optimality. 

%Or they rely on some prescription.
%
Although both paradigms can be effective in practice, they share a common limitation: each requires a centralized object (a critic, a mixing network, or a virtual coordinator) to aggregate information that is not locally available to individual agents, and is therefore not fully decentralized. 
This limitation becomes particularly restrictive in large-scale networked systems, or in settings where centralized coordination is costly, unavailable, or incompatible with the system architecture.
Motivated by this gap, we consider a team setting in which all members share the same reward. Team problems provide a natural framework for studying cooperation under decentralized information, since the agents are perfectly aligned through a common objective, and the main challenge arises from the information structure rather than from strategic conflicts~\cite{marschak1955elements,radner1962team,malikopoulos2022team,Xu2025deviation,Xu2025when,Dave2021a}. 
In particular, recent results in team decision theory have shown that, under suitable conditions, one can establish a correspondence between the solution computed by an individual member and the corresponding component of the solution of a centralized manager~\cite{malikopoulos2022team}. 
This viewpoint is appealing because it provides a principled way to reduce a decentralized team problem to member-side decision problems while preserving the team objective~\cite{Malikopoulos2026a}. However, existing results of this type are developed under the assumption that the underlying problem model is known.
Extending the team-theoretic correspondence to a reinforcement learning setting is not straightforward: when the model must itself be learned from data, every member's local decision rule depends on a learned surrogate model, and the global–local equivalence that supports team theory must be re-established under this learned object. 
% This requires the surrogate model to be agent-agnostic and locally reproducible—a property that does not hold for arbitrary nonlinear transition kernels.
 % In particular, this equivalence requires that all members share the same surrogate model, which in turn requires the model class to admit a representation that (i) describes the environment dynamics independently of any individual member, and (ii) can be consistently recovered by each member from the data accessible to the team. 
 
In this paper, we address this gap by studying decentralized team decision problems for a class of $\gamma$-observable POMDPs with unknown model. Our framework combines team-theoretic equivalence with low-rank representation learning. The low-rank structure plays a role specifically tailored to our setting: it characterizes the latent dynamics in an agent-agnostic manner, so that all members can learn the same dynamics representation from delayed common information and then maintain their own beliefs locally with private information. This separation between a globally shared dynamics representation and locally maintained beliefs is what makes fully decentralized learning and execution feasible. Based on the resulting member-side approximate low-rank MDP, each member computes its policy by value iteration without any centralized coordinator at training or execution time. Unlike prescription-based common-information formulations, our methodology does not rely on a virtual coordinator that selects mappings from local information to actions. Instead, we directly construct member-level decision representations and analyze how these local objects collectively approximate the team-optimal behavior under the same surrogate model. As a result, the proposed framework more naturally enables a fully decentralized implementation.
%
% Different from prescription-based common-information formulations, our method does not rely on a virtual coordinator that selects mappings from local information to actions. Instead, we directly construct member-level decision representations and analyze how these local objects collectively approximate the globally desirable team behavior. As a result, the proposed framework more naturally supports fully decentralized implementation.

% The proposed framework connects two lines of research that are typically studied separately. On the one hand, team theory provides a structural interpretation of decentralized optimality by relating member-side and centralized solutions. 
% On the other hand, low-rank modeling and value-iteration-based reinforcement learning offer a tractable route for learning in partially observed en  ·ironments with unknown dynamics. 
% By combining these two viewpoints, we obtain a decentralized approach tailored to partially observed team problems that does not rely on the CTDE architecture. 
% In particular, under suitable structural conditions, we show that the member-side solution yields an approximate team-optimal solution to the original centralized problem, even each member uses only local private data together with delayed common data.

% In this paper, we provide a step toward fully decentralized reinforcement learning for cooperative partially observed systems, and build a bridge between team decision theory and modern low-rank learning methods for structured POMDPs. 
The main contributions of this paper are summarized as follows:
\begin{itemize}
    \item We formulate a decentralized team learning problem for a class
    of $\gamma$-observable POMDPs with an unknown model, in which each member acts based only on its local private information and delayed
    common information, without any centralized coordinator or
    prescription mechanism at training or execution time.

    \item We propose a decentralized representation learning framework
    that exploits the low-rank structure of the latent transition kernel
    as a structural basis for decentralization. The shared dynamics
    representation $(\omega,\psi)$ is learned by all members from delayed
    common information, while member-side representations
    $(\widehat{\phi}^{\,i},\widehat{\mu}^{\,i})$ are formed locally by
    combining this shared component with each member's private belief
    update. Each member then computes its policy via value iteration on
    its own approximate low-rank MDP.

    \item We extend the team-theoretic correspondence between
    member-side and centralized solutions from the known-model setting
    to the model-unknown setting. We establish a global--local
    equivalence (Theorem~\ref{theo:ctde_consistency}) showing that, under the shared surrogate model, each member's locally computed policy coincides with the
    corresponding component of the team-optimal policy under global
    information, and we provide a finite-sample guarantee
    (Theorem~\ref{thm:distributed_team_pac}) on the resulting team performance.
\end{itemize}

The paper is organized as follows. Section II formulates the problem and Section III presents the proposed decentralized algorithm. Section IV provides the convergence analysis and sample-complexity bound. Section V reports numerical experiments and Section VI provides concluding remarks and a glimpse of future research directions.

\section{Preliminaries}

\subsection{Notation}
% For any natural number $n \in \mathbb{N}$, we use $[n]$ to denote the set
% $\{1,\cdots,n\}$. 
For vectors we use $\|\cdot\|_p$ to denote the
$\ell_p$-norm, and we use $\|x\|_{\Lambda}$ to denote $\sqrt{x^\top
\Lambda x}$.
For two probability distributions $\nu$ and $\nu'$ over a measurable
space $\mathcal X$, we use $ \|\nu-\nu'\|_{\mathrm{TV}} $
to denote their total variation distance, defined by $ \|\nu-\nu'\|_{\mathrm{TV}} =
\sup_{A\subseteq \mathcal X} |\nu(A)-\nu'(A)|. $
% When $\mathcal X$ is finite or countable, this is equivalent to
% \[
% \|\nu-\nu'\|_{\mathrm{TV}}
% =
% \frac{1}{2}\sum_{x\in\mathcal X} |\nu(x)-\nu'(x)|.
% \]
For two sequences $\{a_n\}_{n\ge 1}$ and $\{b_n\}_{n\ge 1}$, we use $a_n \le O(b_n)$ to denote that there exists a constant $C>0$ such that $a_n \le C b_n$.
For any natural number $n \in \mathbb{N}$, we use $[n]$ to denote the set $\{1,...,n\}$.
For a set $S$, we use $\Delta(S)$ to denote the set of all
probability distributions on $S$. For an operator $O:S\to \mathbb{R}$ and
$b\in \Delta(S)$, we use $Ob:\mathcal{O}\to \mathbb{R} $
to denote $\int_S O(o\mid s)b(s)\,ds. $
We consider a finite-horizon partially observable Markov
decision process (POMDP) $\mathcal{P}$, which can be specified as a tuple \[ \mathcal{P} = (\mathcal{S},\mathcal{A}, H, \mathcal{O}, d_0, \{r_h\}_{h=1}^H, {P}, O), \]
where $\mathcal{S}$ is the state space, $\mathcal{A}$ is a finite set of
actions, $H\in \mathbb{N}$ is the episode length, $\mathcal{O}$ is the set
of observations, and $d_0$ is the known initial distribution over states.  
We use ${P}:\mathcal{S}\times\mathcal{A}\to \Delta(\mathcal{S})$
to denote the transition kernel and $r_h:\mathcal{O}\to \mathbb{R}$ is the reward function at step $h$.
We use $O:\mathcal{S}\to\Delta(\mathcal{O})$ to denote observation kernel, where for $s\in\mathcal{S}$ and $o\in\mathcal{O}$, $O(o\mid s)$ is the probability of observing $o$ while in state $s$. 
% We denote
% \[
% r=(r_1,\cdots,r_H),\qquad
% \mathbb{P}=(\mathbb{P}_1,\cdots,\mathbb{P}_H),
% \qquad
% O=(O_1,\cdots,O_H).
% \]
Note that the MDP is a special case of POMDP, where $\mathcal{O}=\mathcal{S}$ and $ O(o\mid s)=\mathbf{1}\{o=s\} $ for all $h\in[H]$, $o\in\mathcal{O}$ and $s\in\mathcal{S}$.

\subsection{Problem Formulation}
We consider a partially observable team with $N$ members, represented by the tuple
\[ \mathcal{G}=(H,\calS,\{\calA^i\}_{i=1}^N,\{O^i\}_{i=1}^N, {P},d_0, r),
\]
where $H\in\Nat$ is the episode length, $\calS$ is the discrete state space (we write $|\calS|=S$), and for member $i$, $\calA^i$ and $\calO^i$ are its discrete action and observation spaces (with $|\calA^i|=A^i$ and $|\calO^i|=O^i$). 

Denote $a=(a_1,\dots,a_N)\in\calA, \calA=\calA^1\times\cdots\times\calA^N,  |\calA|=\prod_{i=1}^N A^i $.
% \begin{equation}
% a=(a_1,\dots,a_N)\in\calA, \calA=\calA^1\times\cdots\times\calA^N,  |\calA|=\prod_{i=1}^N A^i\cdot
% \end{equation}
% \begin{equation}
% P(\cdot\,|\,s,  a)\in\calP(\calS),
% \end{equation}
The transition kernel $P(\cdot\,|\,s,  a)\in\calP(\calS)$ gives the distribution of the next state given $(s, a)$, where $\calP(\calS)$ denotes the set of all probability distributions on $\calS$.
% For convenience, we may view, for a fixed $  a$, the matrix
% \[
% P_h(  a)\in\mathbb{R}^{S\times S},
% \]
% whose $s$-th row is the next-state distribution from state $s$.
The initial state is denoted as $d_0$ and it satisfies $d_0\sim\mu_0$ with $\mu_0\in\calP(\calS)$.

We write $O^i(\cdot\,|\,s)\in\calP(O^i)$ for the $i$-th member’s observation kernel, which denotes the observation distribution at state $s$. Let $O = (O^1,\dots,O^N)$ be the joint observation kernels of all members and
\[  O=O^1\times\cdots\times O^N,\quad  O=\prod_{i=1}^N O^i. 
\]
% The collection of observation kernels is $\mathbb{O}=\{\calO\}_{h = 1,...,H}$, where $  \calO(\cdot\,|\,s)\in\calP(O)$
% is the joint observation distribution at state $s$ and step $h$.
%
The reward $r_{h}(o_h): \mathcal{O} \rightarrow \mathbb{R}$ is the reward function given the joint observation $o_h \in \mathcal{O}$. 
% We denote $ r = (r_1,..., r_N)$.

%\paragraph{Team dynamics}
Next, we describe the partially observable stochastic dynamic team.
At the beginning of each episode, $s_0\sim d_0$. At step $h\in[H]$, each member $i$ receives a private observation $o^i_{h}$; the joint observation $  o_h=(o^1_{h},\dots,o^N_{h})$ is drawn from $O(\cdot\,|\,s_h)$ and rewards $r^i_{h}(o_h)$ are realized. 
Then each member takes an action $a^i_{h}$ and the environment transitions to
\[
s_{h+1}\sim P(\cdot\,|\,s_h,  a_h), \quad   a_h=(a^1_{h},\dots,a^N_{h}).
\]
The episode terminates after step $H \in \mathbb{N}$, a terminal observation $o_{H+1}\sim \calO_{H+1}(\cdot\,|\,s_{H+1})$ is drawn and terminal rewards $r^i_{H+1}(  o_{H+1})$ are received.
This cooperative case is also known as a decentralized partially observable Markov decision process (Dec\mbox{-}POMDP).
We define the value function for policy $\pi$ at step $h$ by
\[ V_h^\pi = \mathbb{E}^{P}_{a_{h}\sim \pi}
\left[ \sum_{t=h}^{H}r_t(o_t) \right],
\]
namely as the expected reward received by following $\pi$.

In the following, we move to the information sharing structure in the team.
% \subsubsection{Information sharing in team}
%
Each team member $i$ maintains its own history information $\tau^i_{h}$,
which consists of all its historical observations and actions up to step~$h$:
\begin{equation} \label{eq:tau_team}
\tau^i_{h} = \{ a_1, o_2, \ldots, a_{h-1}, o_h\}.
\end{equation}
We denote by $\mathbf{\tau}_h = \{\tau^i_{h}\}_{i=1}^N$ the collection of all members' histories at step~$h$.
In many practical scenarios, members may share part of their information with other members.
% , thereby inducing a structured information pattern that can improve sample and computational efficiency.
The shared part is referred to as \emph{common information}, while the remaining part for each member is its \emph{private information}.

% \paragraph{Common and private information.}
Let $c_h \subseteq \eta_h = \{a_1,o_1,\ldots,a_h,o_h\}$ denote the common information available to all members at step~$h$, and let $\mathcal{C}_h$ denote the collection of all such common information realizations.
Given $c_h$, the private information of member~$i$ is defined as
\begin{equation} \label{eq:private information}
p^i_{h} = \tau^i_{h} \setminus c_h,
\end{equation}
and the collection of private information across all members is denoted by
$p_h = \{p^i_{h}\}_{i=1}^N$.

We consider that both common and private information are updated over time according to recursive rules.
The common information $c_h$ is non-decreasing with~$h$, that is,
$c_h \subseteq c_{h+1}$ for all~$h$.
Let ${\kappa}_{h+1} = c_{h+1} \setminus c_h$ denote the new information revealed at step~$h+1$.
Then the evolution of common information can be represented as
\begin{equation} \label{eq:common_info_update}
c_{h+1} = \{c_h, \kappa_{h+1}\}, 
\kappa_{h+1} = \chi_{h}(p_h, a_h, o_{h+1}),
\end{equation}
where $\chi_{h}$ is a fixed transformation.
Similarly, the private information of member~$i$ evolves according to
\begin{equation}  \label{eq:private_info_update}
p^i_{h+1} = \nu^i_{h}(p^i_{h}, a^i_{h}, o^i_{h+1}),
\end{equation}
where $\nu^i_{h}$ is also a fixed transformation.
% We use $z_{h+1}$ to denote the collection of $\{z_{i,h+1}\}_{i=1}^N$ at step~$h{+}1$.
% Since the trajectory typically starts from $a_1$ instead of $o_1$, we set $c_1 = \emptyset$.

In this paper, we adopt an \emph{$n$-step delayed sharing information structure}~\cite{nayyar2010optimal}. The common information $\Delta_t$ at time $t$ includes joint observations and actions from $n$ steps prior, while private information $\Lambda_t^i$ contains local data from the recent $n$ steps.
%their
Thus, each member can only use their delayed common information and the private information to execute the actions during the decentralized algorithm.

% Thus, at each step $h$, member $i$ selects its action
% based only on the delayed common information $c_h$ and its private
% information $p_h^i$.

{
The objective of the partially observable team is to find a joint policy that maximizes the expected cumulative team reward under the above information-sharing structure. This problem can be formalized as follows:
\begin{equation}
    \pi^\star
    \in
    \arg\max_{\pi\in\Pi_{\mathrm{dec}}}
    V_1^\pi,
    \label{eq:team_problem}
\end{equation}
where $\Pi_{\mathrm{dec}}$ denotes the set of admissible joint policies
$\pi=(\pi^1,\ldots,\pi^N)$ satisfying
$\pi_h^i(\cdot\mid c_h,p_h^i)\in\Delta(\mathcal{A}^i)$ for all
$i\in[N]$ and $h\in[H]$. 
}

{
\begin{remark}
Although the information structure considered in this paper allows members to receive delayed common information generated by other members, we use the term "decentralized" following the convention in the common-information literature~\cite{nayyar2013decentralized,kao2022common,malikopoulos2022team}.
The problem remains decentralized in the sense that each member selects its action based only on its own information set, consisting of its private information and the available delayed common information, without a centralized decision maker determining the joint action.
Thus, while the implementation may also be viewed as
distributed due to information exchange, the underlying decision problem is referred to here as decentralized.
\end{remark}
}

\subsection{Representation Learning for Low-rank POMDP}
We consider the case where the transition kernel of the environment admits a low-rank structure. This assumption plays a structural role specifically tailored to our decentralized setting. 
Specifically, all members can learn the same representation from delayed common information and then maintain their own beliefs locally using private information. This {\it separation} between a shared dynamics representation and locally updated beliefs makes fully decentralized learning and execution feasible and supports the global–local equivalence established in Theorem~\ref{theo:ctde_consistency}.
We formalize the low-rank structure as follows.

\begin{definition}(Low-rank transition) \label{defi:low-rank}
A transition kernel ${P}: \mathcal{S} \times \mathcal{A} \rightarrow \Delta (\mathcal{S})$ admits a low-rank decomposition of dimension $d$ if there exist two mappings $\omega^*: \mathcal{S} \rightarrow \mathbb{R}^d$ and $\psi^* : \mathcal{S} \times \mathcal{A} \rightarrow \mathbb{R}^d$ such that 
\begin{equation}
{P}(s' | s, a) = \omega^*(s')^\top \psi^*(s, a).
\end{equation}
\end{definition}

The mappings $\omega^*$ and $\psi^*$ describe latent state-action dynamics, once $(\omega^*, \psi^*)$ is learned from data accessible to the team, each member can construct its own member-side representation by combining these shared mappings with its locally available beliefs. We further focus on the observability POMDP setting, formalized below.

% Also, we focus on the observability POMDP setting, as shown in the following assumption.
\begin{assumption}\label{ass:observability}
For $i \in [N]$, let $\gamma^i> 0$, $O$ be the operator with $O(\cdot \mid s)$, indexed by states $s$. The operator $O$ satisfies $\gamma$-observability for any distributions $b, b'$ over states, i.e.,
\begin{equation}
\|O^i b - O^i b'\|_1 \geq \gamma^i \|b - b'\|_1.
\end{equation}
% A POMDP satisfies $\gamma$-observability if all $h \in [H]$ satisfy $\gamma$-observability.
\end{assumption}

Assumption~\ref{ass:observability} implies that the members' operator is an injection. 
We also make the following assumptions for the joint observation of the team.
\begin{assumption}\label{ass:factorized_joint_gamma_obs}
The joint-observation model satisfies the same observability condition as in the original low-rank framework, with the single observation space replaced by the joint observation space $ \mathcal O^{\mathrm{joint}}=\mathcal O^1\times\cdots\times\mathcal O^N.$
Specifically, the corresponding observability operator associated with the joint decoder
\[
O^{\mathrm{joint}}(o^{1:N}\mid s) = \prod_{i=1}^N O^{i}(o^i\mid s),
\]
is assumed to be nondegenerate, with observability constant $\gamma > 0$.
\end{assumption}

\begin{assumption}
\label{ass:factorized_joint_obs}
For each stage $h\in[H]$, let
\begin{equation}
o_h^{1:N} = (o_h^1,\dots,o_h^N)\in O^1\times\cdots\times O^N,
\end{equation} denote the collection of observations received from all members. Conditioned on the latent state $s_h\in\mathcal S$, the members' observations are conditionally independent, i.e.,
\begin{equation}
O^{\mathrm{joint}}(o_h^{1:N}\mid s_h)
=\prod_{i=1}^N O^i(o_h^i\mid s_h),
\end{equation}
where $O^i(\cdot\mid s_h)$ is the local observation kernel of member $i$.
\end{assumption}

\begin{remark}
In a decentralized team setting, the manager aggregates local observations from all members and treats them as a joint observation variable. 
Under the conditional independence assumption across members, the resulting joint observation kernel admits a factorized form.  
Consequently, the use of joint observations modifies the observation decoder but does not alter the latent predictive representation, provided that the induced joint-observation model still satisfies a $d$-dimensional realizability condition and a $\gamma$-observability condition.
\end{remark}

It has been shown that when Assumption~\ref{ass:observability} holds, the POMDP can be approximated by MDP with state space $\mathcal{Z} = {\calO}^L \times \mathcal{A}^{L-1}$ (see \cite{guo_randcom_2023,zhang2024provable,efroni2022provable,uehara2022provably}).
In our work,
we also consider $L$-memory policies. 
For all $h \in [H]$, let $\mathcal{Z}_h = {\calO}^L \times \mathcal{A}^{L-1}$.
An element $z_h \in \mathcal{Z}_h$ is represented as 
\begin{equation}
z_h = (o_{h+1-L:h},\, a_{h+1-L:h-1}),
\end{equation}
where $o_{h+1-L:h} = (o_{h+1-L}, \ldots, o_h)$ 
and $a_{h+1-L:h-1} = (a_{h+1-L}, \ldots, a_{h-1})$.
We also make a commonly used realizability assumption that the given function class contains the true function.
\begin{assumption} \label{ass:model_class}
There exists a known model class
\begin{equation}
\mathcal{F} =
\left\{
\left( \omega, \psi \right) : \omega \in \Omega, \psi \in \Psi
\right\},
\end{equation}
such that \(\omega^* \in \Omega\) and \(\psi^* \in \Psi\).
Recall that according to Definition~\ref{defi:low-rank}, the new observation conditioned on the state-action pair satisfies
\begin{align}
&P(o_{h+1} \mid s_h, a_h) \nonumber
\\&=O_{h+1}(o_{h+1} \mid s_{h+1})
\,{\omega}(s_{h+1})^\top
\psi(s_h, a_h).
\end{align}
\end{assumption}

% With Assumption~\ref{ass:model_class}, for any 
% $z_h = (o_{h+1-L:h},\, a_{h+1-L:h-1})$, $a_h$, and $o_{h+1}$, we have
% \begin{align}
% &{P}(o_{h+1} \mid o_{h+1-L:h}, a_{h+1-L:h}) \nonumber
% \\&= \Biggl[\int_{\mathcal{S}'} 
%     \omega_h^{\star}(s')^{\top} 
%     O_{h+1}^{\star}(o_{h+1} \mid s') 
%     \, \mathrm{d}s' \Biggr]
%     \phi^{\star}(z_h,a_h).
% \end{align}

For any $o_h \in \mathcal{O}$, we denote
\begin{equation}\label{eq:mu}
\mu(o_h) = \int_{\mathcal{S}'} 
\omega(s')^{\top} O(o_h \mid s')\mathrm{d}s'.
\end{equation}

% According to \cite{uehara2022provably},
Recall that when Assumption~\ref{ass:observability} holds, the POMDP can be approximated by an MDP whose state space is $\mathcal{Z}_h = {O}^L \times \mathcal{A}^{L-1}$.
Specifically, for any $\mathcal{P} = (O, \omega, \psi)$, we can construct an approximated MDP
\begin{equation} 
\mathcal{M} = (\mu,\phi),
\end{equation}
where \((\phi,\mu)=q(\omega,\psi)\) for an explicit function \(q\), which is defined in detail in Section~\ref{sec:algorithm_design}.
% can be obtained by \eqref{eq:mu} and \eqref{eq:phi}.
This approximated MDP \(\mathcal{M}\) satisfies that
\begin{equation} \label{eq:MDP}
P^{\mathcal{M}}(o_{h+1}\mid z_h,a_h)
= \mu^{\top}(o_{h+1})\,\phi_h(z_h,a_h).
\end{equation}
At the same time, the POMDP \(\mathcal{P}\) satisfies that
\begin{equation} \label{eq:pomdp}
{P}^{\mathcal{P}}(o_{h+1}\mid \eta_h,a_h)
= \mu^{\top}(o_{h+1})\,\xi_h({\eta}_h,a_h),
\end{equation}
where the definition of $\xi_h$ can be found in~\eqref{eq:xi} in Section~\ref{sec:algorithm_design}, $\eta_h = (o_{3-2L:h}, a_{3-2L:h})$.
Following
\cite{guo2023provably}, we consider an extended POMDP, where dummy observations
and actions before the first real decision stage are introduced only for
notational convenience. This construction allows the finite-memory window to
be written uniformly for all stages, including the early stages. These dummy
variables do not affect the initial state distribution or the actual
interaction with the environment.

\section{Algorithm Design} \label{sec:algorithm_design}
% \subsection{Algorithm Design}
In this section, we present a decentralized algorithm for solving the team decision problem with an unknown model. The design reflects the {\it structural separation}: the latent dynamics can be learned from data shared across the team, whereas beliefs depend on each member's own observation history and must be maintained locally.
The implementation is divided into three steps, and the methodological procedure  is given in Algorithm~\ref{alg:ma-porl-jr}.

\paragraph{Step 1: Decentralized low-rank learning of transition matrix}
Using the delayed common information available to the entire team, each member performs the same maximum-likelihood estimation from the delayed common buffer to learn a shared representation $\omega$ and $\psi$ of the low-rank transition kernel $P$, together with a common exploration bonus $\hat{b}$. The features $\omega$ and $\psi$ describe only the latent state-action dynamics, the resulting estimate is identical across members and does not require any centralized coordinator at execution time.

\paragraph{Step 2:  Decentralized local information state update}
Each member $i$ then combines the shared dynamics representation
$(\omega,\psi)$ with its own private information to construct
member-side representations $(\widehat{\phi}^{\,i},\widehat{\mu}^{\,i})$.
The transition component is inherited from the team-level estimation,
while the belief update is carried out locally through the operator
$q^i$. This is precisely the shared-dynamics/local-belief decomposition enabled by the low-rank structure.

\paragraph{Step 3: Decentralized Execution}
Each member $i$ uses its local representations
$(\widehat{\phi}^{\,i},\widehat{\mu}^{\,i})$ and the exploration bonus
to compute its policy via a finite-horizon backward value iteration, 
{
which recursively evaluates the local value function and selects the greedy action at each stage,
}
as described in 
Algorithm~\ref{alg:ma-lsvi-jr}. No inter-member communication or centralized prescription
is required at execution time. The global-local equivalence established
in Theorem~\ref{theo:ctde_consistency} ensures that the resulting member-side policies jointly approximate the team-optimal solution under the same surrogate model.

% ===================== Algorithm: Joint-policy rollout + local learning =====================
\begin{algorithm}[!t]
\caption{Decentralized Partially Observable Team with non-classical information sharing}
\label{alg:ma-porl-jr}
\textbf{Require:} members $i\in [N]$, horizon $H$, memory $L$, representation classes $\{\mathcal F_h^i\}_{h=1}^{H}$, outer iterations $K$, parameters $\{\alpha_k,\lambda_k\}$.

\begin{algorithmic}[1]
\State Initialize per-member policies $\{\pi^{i}_0\}_{i=1}^N$, buffers $\mathcal D_h^i\gets\varnothing$ for all $h,i$.
\For{$k=1,2,\ldots,K$}
\CommentState{Sampling under joint-policy}

\For{each $h \in [H]$}
 \CommentState{Round-1 (cut at $h{-}L$):} 
run the joint policy $\pi^{k-1}$ from episode start to $h{-}L$; from $h{-}L$ to the end use uniform policy $U(\mathcal{A}^i)$. 
Local observation and action: $y^i_h = (o^i_{h-L+1:h}, a^i_{h-L+1:h-1})$. Append $(y_h^i,a_h^i,o_{h+1}^i)$ to $\mathcal D_h^i$.
   \CommentState{Round-2 (cut at $h{-}2L$):}
run $\pi^{k-1}$ to $h{-}2L$; from $h{-}2L$ to end use uniform policy $U(\mathcal{A}^i)$. Append $(y_h^i,a_h^i,o_{h+1}^i)$ to $\mathcal D_h^{i'}$.
\EndFor

\CommentState{Decentralized representation learning}

  \State The members receive the joint information $(y_h,\,a_h,\,o_{h+1})$  until $H-n$, i.e., 
  $\mathcal D_h = \mathcal D^1_h \cup \cdots \cup  \mathcal D^N_h$, $\mathcal D'_h = \mathcal D^{1'}_h \cup \cdots \cup  \mathcal D^{N'}_h$.

  \State
Members learn representations of the system dynamics $\hat{\omega}_{k}, \hat{\psi}_{k}$ with delayed common information until $H-n$: $\mathcal D = \mathcal D_1 \cup ... \cup \mathcal D_{H-n}$ and $\mathcal D' = \mathcal D'_1 \cup ... \cup \mathcal D'_{H-n}$ according to \eqref{eq:representation_learning}.
% \begin{align*}
% &(O_{k}, \omega_{k}, \psi_{k}) \\&=\arg\max_{(O,\omega,\psi)\in\mathcal{F}}
% \mathbb{E}_{D\cup D^{'}}\!\left[
% \log \xi(\eta_h,a_h)^\top \mu(o_{h+1})
% \right].
% \end{align*}
% according to \eqref{eq:xi}.

\State
Members learn the common system model feature $\widehat{\phi}_{k,h} $, $\widehat{\mu}_{k}$ according to~\eqref{eq:phi} and~\eqref{eq:mu} for $h = 1,...,H-n$, and the corresponding exploration bonus: 
\begin{equation}
\widehat{b}_{k,h}(z,a) = \min\left\{
\alpha_k \sqrt{\widehat{\phi}_{k,h}(z,a)^\top \Sigma^{-1}\widehat{\phi}_{k,h}(z,a)}, \, 1 \right\},
\end{equation} 
where $\Sigma = \sum_{z\sim \mathcal{D} \cup \mathcal{D'}} \widehat{\phi}_{k}(z,a)\widehat{\phi}_{k}(z,a)^\top + \lambda_k \mathrm{I}$.
And for $h = H-n+1,...,H$, set $\hat{b}_{k,h} = \hat{b}_{k,H-n}$.
 
\State
Similarly, member $i$ learns its own features $\widehat{\phi}^{i}_{k,h}$ and $\widehat{\mu}^{i}_{k}$ using its private information $p^i_{h}$ and the delayed common information $c_h$, with the common approximate system dynamics  $\hat{\omega}_{k}, \hat{\psi}_{k}$, $h=1,...,H$ according to \eqref{eq:local-q-map}:
\begin{equation}
(\widehat{\phi}^{i}_{k,h}, \widehat{\mu}^{i}_{k}) = q^i(\hat{\omega}_{k}, \hat{\psi}_{k})
\end{equation}
\CommentState{Decentralized Local policy update:} 

\State
Call Algorithm~\ref{alg:ma-lsvi-jr} with the exploration bonus $\tilde{b}_{k} = \operatorname{col}\{\hat{b}_{k,h}\}_{h= 1,...,H}$
% \begin{equation}
% \tilde{b}_{k} = \operatorname{col}\{\hat{b}_{k,h}\}_{h= 0,...,H-1}\end{equation}
and the local representation functions,  $\{\hat{\phi}^{i}_{k,h}\}_{h= 1,...,H}, \hat{\mu}^{i}_{k}$ to obtain local policy $\pi^{i}_{k}$.

\EndFor
\State \textbf{return} the decentralized policies 
$\{\pi^{i}_K\}_{i=1}^{N}$.
\end{algorithmic}
\end{algorithm}

% ===================== Per-agent LSVI (unchanged structure, uses only local info) joint action =====================
% \begin{algorithm}[!t]
% \caption{Value Iteration (Agent $i$)}
% \label{alg:ma-lsvi-jr}
% \textbf{Require:} rewards $\{r_h\}$, bonus $\{b^i_h\}$, features $\phi^i_h$, $\mu^i$, $h= 1,...,H$.

% \begin{algorithmic}[1]
% \State Initialize $V_H^i(z)=0$ for any local info-state $z$.
% \For{$h=H-1$ \textbf{down to} $1$}
%   \For{$(z_h^i,a_h)\in \mathcal Z^i\times \mathcal A$}
% \begin{equation}
% \begin{aligned}
% & Q_h^i(z_h^i,a_h) = r_h+b_h \\& +\sum_{o_{h+1}^i} \big(\phi^i_h(z_h^i,a_h)\big)^\top \mu^i(o_{h+1}^i)
% V_{h+1}^i(z^i_{h+1})
% \end{aligned}
% \end{equation}
% where $z^i_{h+1} = (o_{h-L+2:h+1}^i, a_{h-L+2:h}^i )$.

% \EndFor
% \State 
% $V_h^i(z) \leftarrow  \max_{a\in \mathcal A} Q_h^i(z,a)$,

% $a  \leftarrow \arg\max_{a\in \mathcal A} Q_h^i(z,a) $
% \State
% % $\pi_h^i(z) \leftarrow \arg\max_{a\in \mathcal A^i} Q_h^i(z,a)$
% $\pi_h^i(z) \leftarrow a^i$
% \EndFor
% \State \textbf{return} $\pi^i=\{\pi_h^i\}_{h=1}^{H}$.
% \end{algorithmic}
% \end{algorithm}

% ===================== Per-agent LSVI (unchanged structure, uses only local info) =====================
\begin{algorithm}[!t]
\caption{Value Iteration (Member $i$)}
\label{alg:ma-lsvi-jr}
\textbf{Require:} rewards $\{r_h\}$, bonus $\{b^i_h\}$, features $\phi^i_h$, $\mu^i$, $h= 1,...,H$.

\begin{algorithmic}[1]
\State Initialize $V_H^i(z)=0$ for any local info-state $z$.
\For{$h=H-1$ \textbf{down to} $1$}
  \For{$(z_h^i,a_h^i)\in \mathcal Z_h^i\times \mathcal A^i$}
\begin{equation}
\begin{aligned}
& Q_h^i(z_h^i,a_h^i) = r_h+b_h \\& +\sum_{o_{h+1}^i} \big(\phi^i_h(z_h^i,a_h^i)\big)^\top \mu^i(o_{h+1}^i)
V_{h+1}^i(z^i_{h+1})
\end{aligned}
\end{equation}
where $z^i_{h+1} = (o_{h-L+2:h+1}^i, a_{h-L+2:h}^i )$.

\EndFor
\State 
$V_h^i(z) \leftarrow  \max_{a\in A^i} Q_h^i(z,a)$,
\State
$\pi_h^i(z) \leftarrow \arg\max_{a\in A^i} Q_h^i(z,a)$
\EndFor
\State \textbf{return} $\pi^i=\{\pi_h^i\}_{h=1}^{H}$.
\end{algorithmic}
\end{algorithm}

We would like to remark that the two exploratory datasets $\mathcal D$ and $\mathcal D'$ in Algorithm~\ref{alg:ma-porl-jr} play different roles: the first one provides coverage for the $L$-memory information state at step $h$, 
while the second one, generated with uniform exploration over the last $2L$ steps, provides additional coverage after moving the analysis
back by $L$ steps, 
% while the second one is used to support the $L$-step distribution-backtracking argument in the analysis, 
following the exploration design in~\cite{guo2023provably}.
{
Besides, the shared dataset $\mathcal D_h=\cup_i \mathcal D_h^i$ should not be interpreted as a centrally stored training set. Rather, it represents the delayed common information that becomes available to all members under the assumed information-sharing structure. 
% Hence, each member can reconstruct the same shared surrogate model from the same delayed common information, while its belief update and policy computation remain local.
}

In the following, we introduce the detailed construction of the representations.
We first calculate the belief, i.e., the information state, which is the conditional probability of state $s_h$ given the true transition and an action and observation sequence $\{o_{3-2L}, a_{3-2L}, \cdots, a_h, o_h\}$ and $1 \le h \le H$ respectively.
Consider a POMDP and a history $(o_{3-2L:h}, a_{3-2L:h-1})$, the belief
$b_h^{\mathcal P}(o_{3-2L:h}, a_{3-2L:h-1}) \in \Delta(\mathcal S)$ is given by the distribution of the state $s_h$ conditioned on taking actions $a_{1:h-1}$ and observing $o_{1:h}$ in the first $h$ steps. Formally, the belief state is defined inductively as follows:
\begin{equation}
b_1^{\mathcal P}(\varnothing)=b_1,
\end{equation}
where $b_1$ is a properly chosen prior distribution whose precise form is deferred to \cite{guo2023provably}.
For $2 \le h \le H$ and any $(o_{3-2L:h}, a_{3-2L:h-1}) \in \mathcal{H}_h$, define
\begin{align}
&b_h^{\mathcal P}(o_{3-2L:h}, a_{3-2L:h-1})
\nonumber
\\&=U_{h-1}^{\mathcal P}\bigl(
b_{h-1}^{\mathcal P}(a_{1:h-2},o_{2:h-1});
\,a_{h-1},o_h
\bigr),
\end{align}
where for $b\in\Delta(\mathcal S)$, $a\in\mathcal A$, $o \in\mathcal O$, the belief update operator
$U^{\mathcal P}$ is defined as
\begin{equation}
U^{\mathcal P}(b;a,o)(s) =
\frac{
O_{h+1}(o\mid s)\cdot \sum_{s'\in\mathcal S} b(s')\cdot P(s\mid s',a)
}{\sum_{x\in\mathcal S} O_{h+1}(o\mid x)\sum_{s'\in\mathcal S} b(s')\cdot  P(x\mid s',a)
}.
\end{equation}

% {\color{red} What is the new observation for the members, does it include the other members delayed information, should it only use its own observation function, if only use its own observation function, or it should include the other members delayed information, and other members observation function? leave it as it is now
% }

This operator calculates the belief state for the $h+1$-step when the belief for the $h$-step is $b$, and after the member takes action $a$ and receives the observation $o$.
Recall that the transition matrix has a low-rank structure, we have
\begin{align}
& P^{\mathcal P}(o_{h+1}\mid o_{3-2L:h},a_{3-2L:h})
\nonumber
\\&= \int_{\mathcal S_{h+1}} \omega(s_{h+1})\cdot O(o_{h+1}\mid s')
\,ds'\nonumber
\\& \cdot \int_{\mathcal S} \psi(s,a_h)\,
b_h^{\mathcal P}(o_{3-2L:h},a_{3-2L:h-1})(s)\,ds .
\end{align}
For $h\in[H]$, we denote
\begin{equation} \label{eq:xi}
\xi_h(\eta_h,a_h) = \int
\psi(s_h,a_h)\,
b_h^{\mathcal P}(\eta_h)(s_h)\,d s_h,
\end{equation}
we define the approximated belief $\bar b_h(o_{h-L:h},a_{h-L:h-1})$ to approximate the true belief $b_h(o_{3-2L:h},a_{3-2L:h-1})$.

The Bellman recursion is evaluated with respect to the member-side predictive distribution induced by the common surrogate model. By Theorem 1, this recursion is equivalent to the corresponding component of the manager-side Bellman recursion.

For $b\in\Delta(\mathcal S)$ and $o\in\mathcal O$, define $B(b,o)$ as the operation that incorporates observation $o$ by
\[ B(b,o)(s)=\frac{
O(o\mid s)\cdot b(s)
}{ \sum_{x\in\mathcal S} O(o\mid x)\sum_{s'} b(s') },
\]
which denotes the belief distribution after receiving the observation $o$ as the original belief distribution was $b$.
For an action and observation sequence $\{o_{3-2L}, a_{3-2L}, \cdots, o_H, a_H\}$ and $2\le h \le H$. We define the approximated belief as:
\begin{align} \label{eq:finite-memory-belief-recursion}
&\bar b_{h-L} = B(\tilde b_0^{\,h-L},\, o_{h-L}),
\nonumber\\&
\bar b_{h-L+\ell}(o_{h-L:h-L+\ell},a_{h-L:h-L-1+\ell})
\nonumber\\&=
U_{h-L-1+\ell}^{\mathcal P}
\Bigl(
\bar b_{h-L-1+\ell}(o_{h-L:h-L-1+\ell},a_{h-L:h-L-\ell}),
\nonumber\\&o_{h-L+\ell},a_{h-L-1+\ell}
\Bigr),
\quad 1\le \ell \le L.
\end{align}
% where the construction of the initial belief $\tilde b_0^{\,h-L}$ can be found in \cite{guo2023provably}.

The resulting belief at the end of this recursion is denoted by
\(\bar b_h(z_h)\). In other words, \(\bar b_h(z_h)\) is obtained by applying
Bayesian filtering only over the recent finite-memory window instead of the
entire history \(\eta_h\).

Replacing the true belief in \eqref{eq:xi} with the approximate belief gives
the finite-memory feature
\begin{equation}
\phi_h(z_h,a_h)
=
\int_{\mathcal S}
\psi(s_h,a_h)
\bar b_h(z_h)(s_h)\,ds_h .
\label{eq:phi}
\end{equation}
Consequently, the original POMDP induces an approximate low-rank MDP
\(\mathcal M\) over the finite-memory state \(z_h\), whose transition kernel is
given by
\[
P^{\mathcal M}(o_{h+1}\mid z_h,a_h)
=
\mu(o_{h+1})^\top \phi_h(z_h,a_h).
\]
Thus, planning in the original POMDP can be approximated by planning in an MDP
whose state is the finite-memory representation \(z_h\) and whose transition kernel has a low-rank form.

% Initialize policy $\pi_0 = \{\pi_{0,1}, \ldots, \pi_{0,H}\}$ to be arbitrary policies  and replay buffers $\mathcal{D}_h = \emptyset$, $\mathcal{D}'_h = \emptyset$ for all $h$. 
% %
% Data collection from $\pi_{k-1}$, $\forall h \in [H]$, 
% $\eta_h^{k-1} \sim d^{\pi_{k-1}}_{h, \mathcal{O}_1, \mathcal{U}(\mathcal{A})}$, 
% $\mathcal{D}_h = \mathcal{D}_h \cup \{\eta_h\}$, 
% $\tilde{\eta}_h^{k-1} \sim d^{\pi_{k-1}}_{h, \mathcal{O}_2, \mathcal{U}(\mathcal{A})}$, 
% $\mathcal{D}'_h = \mathcal{D}'_h \cup \{\tilde{\eta}_h\}$.
%
%
%

At iteration \(k\), for each \(h\in[H]\), we collect data using the policy
\(\pi_{k-1}\) with additional uniform exploration. Specifically, we sample
\[
\eta_h^{k}
\sim
d^{\pi_{k-1}\circ_L \mathcal U(\mathcal A)}_{h},
\qquad
\tilde \eta_h^{k}
\sim
d^{\pi_{k-1}\circ_{2L} \mathcal U(\mathcal A)}_{h},
\]
and update the replay buffers as
\[
\mathcal D_h
\leftarrow
\mathcal D_h\cup\{\eta_h^{k}\},
\qquad
\mathcal D'_h
\leftarrow
\mathcal D'_h\cup\{\tilde \eta_h^{k}\}.
\]
Here \(d^{\pi_{k-1}\circ_L \mathcal U(\mathcal A)}_{h}\) denotes the
distribution of the stage-\(h\) history induced by following \(\pi_{k-1}\)
and applying the uniform exploration policy \(\mathcal U(\mathcal A)\) over
the most recent \(L\) steps. Similarly,
\(d^{\pi_{k-1}\circ_{2L} \mathcal U(\mathcal A)}_{h}\) uses uniform
exploration over the most recent \(2L\) steps.
Then we learn and update the representation $\psi_k$ and $\omega_k$ for the low-rank transition matrix for the latent system, as follows
\begin{align} \label{eq:representation_learning}
&(\omega_{k}, \psi_{k}) 
\nonumber
\\&=\arg\max_{( \omega, \psi)\in\mathcal{F}}
\mathbb{E}_{D_h\cup D^{'}_h} \left[
\log \xi_h(\eta_h,a_h)^\top \mu(o_{h+1})
\right],
\end{align}
where $\mu$ is computed by~\eqref{eq:mu}, $\xi$ is defined in~\eqref{eq:xi}.
Besides, the representation $\phi_h$ is then obtained by
\begin{equation} \label{eq:phi}
\phi_h(z_h, a_h)= \int \psi(s_h, a_h) \bar{b}_h^{\mathcal{P}}(z_h)(s_h)\mathrm{d}s_h.
\end{equation}
The representation $\phi_h$ will also be used to calculate the exploration bonus.

% Building upon the centralized representation learning discussed so far, we now shift our focus to decentralized methods required for team settings. In the following section, we define the specific information structure explored in this work.

% The above construction is written for a generic finite-memory information
% state $z_h$. In the centralized reference case, $z_h$ contains the joint
% observation-action history of all members, and the resulting approximate MDP serves as the global reference model. 

Building upon the centralized representation learning discussed so far, next, we shift our focus to decentralized methods required for team settings. 
In the decentralized algorithm, under the delayed
sharing information structure, member $i$ only observes the delayed common
information $c_h$ and its private information $p_h^i$. Therefore, the local finite-memory information state of member $i$ is denoted by
\begin{equation}
z_h^i=(c_h,p_h^i),
\label{eq:local-info-state}
\end{equation}
or, equivalently, by the corresponding finite-memory window contained in
$(c_h,p_h^i)$.

Applying the same finite-memory belief recursion in
\eqref{eq:finite-memory-belief-recursion} to the information available to
member $i$ gives the member-side approximate belief
\begin{equation}
\bar b_h^i(z_h^i).
\label{eq:local-approx-belief}
\end{equation}
This belief is still computed using the shared dynamics representation $(\omega,\psi)$, but the conditioning information is now local to member $i$.
Thus, the difference between the centralized construction and the decentralized construction lies in the information used to form the belief, not in the latent dynamics representation.
The member-side feature used by Algorithm~\ref{alg:ma-porl-jr} is then defined as
\begin{equation}
\phi_h^i(z_h^i,a_h^i)
=
\int_S \psi^i(s_h,z_h^i,a_h^i)
\bar b_h^i(z_h^i)(s_h)\,ds_h,
\label{eq:local-feature}
\end{equation}
and
\begin{equation}
\mu^i(o_{h+1}^i)
=
\int_S \omega(s')^\top O^i(o_{h+1}^i\mid s')\,ds' .
\label{eq:local-mu}
\end{equation}
Here $\psi^i$ denotes the effective local state-action feature induced by the
shared feature $\psi$ under member $i$'s available information. 
%%%%%%%
% When the original low-rank feature \(\psi\) is defined on the
% joint action \(a_h=(a_h^i,a_h^{-i})\), \(\psi_h^i\) is obtained by
% marginalizing over the unavailable actions of the other members under the
% current joint policy, namely
% \[
% \psi_h^i(s,z_h^i,a_h^i)
% =
% \mathbb{E}\!\left[
% \psi(s,(a_h^i,A_h^{-i}))
% \mid z_h^i
% \right].
% \]
%%%%%
Equivalently, we write
\begin{equation}
(\phi_h^i,\mu^i)=q^i(\omega,\psi),
\label{eq:local-q-map}
\end{equation}
where $q^i$ is the local version of the mapping $q$ obtained by replacing the
full finite-memory information state $z_h$ with the local information state
$z_h^i=(c_h,p_h^i)$.

At each iteration $k$, for the decentralized implementation, after the shared dynamics representation
$(\omega_k,\psi_k)$ has been learned from the delayed common information as in~\eqref{eq:representation_learning}, each
member $i$ constructs its own local representations $\phi_{k,h}^i,\mu_k^i$, which is then used in the value iteration.

\section{Convergence analysis}
We first provide, for the reader's convenience, some technical results 
{established in prior works, which will be used in our subsequent analysis.}
% used in our analysis and available in the literature.
\subsection{Technical Lemmas}
The approximate MDP $\mathcal{M}$ defined in \eqref{eq:MDP} retains the structure of low-rank POMDP, and we have the following results.
\begin{lemma} \cite{guo2023provably} \label{lem:L_memory}
For any $\epsilon_1 > 0$, there exists an $L$-structured MDP $\mathcal{M}$ defined in \eqref{eq:MDP} with
\begin{equation}
L = O\!\left(\gamma^{-4}\log\!\bigl(d/\epsilon_1\bigr)\right),
\end{equation}
such that for all $\pi $ and $h \in [H]$,
\begin{align}
&\mathbb{E}_{a_{1:h},\,o_{2:h+1}\sim \pi}\!\left[
\left\| p^{\mathcal{M}} \!\left(o_{h+1}\mid z_h,a_h\right)
- p^{\mathcal{P}} \!\left(o_{h+1}\mid o_{1:h},a_{1:h}\right)
\right\|_{1}
\right] \nonumber
\\& \le \epsilon_1 .
\end{align}

Then, we have that the conditional probability $\mathcal{P}(o' \mid z,a)$ is approximately low-rank. 
Next, we define the value function under $\mathcal{M}$ as
\[ V^{\pi,\mathcal{M},r} = \mathbb{E}^{\pi,\mathcal{M}}
\left[ \sum_{h'=1}^{H} r_{h'} \right].
\]
Hence, for an $L$-memory policy $\pi$, the
value function of $\pi$ in $\mathcal{M}$ can effectively approximate the value function under $\mathcal{P}$, for any policy $\pi$, we have
\begin{equation} \label{eq:V_P_M_deviation}
\left| V_{1}^{\pi,\mathcal{P},r}(o_1)
- V_{1}^{\pi,\mathcal{M},r}(o_1) \right|
\le \frac{H^2 \epsilon_1}{2}.
\end{equation}
Also, for any $\pi, h$ we have
\begin{equation} \label{eq:P_M_deviation}
\left\|  d_h^{\pi,\mathcal P} - d_h^{\pi,\mathcal M} \right\|_{\mathrm{TV}} \le h\epsilon_1.
\end{equation}

\end{lemma}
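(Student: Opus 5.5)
The lemma has three parts, so I will prove them in order: the one-step predictive bound, then the value-function bound \eqref{eq:V_P_M_deviation}, then the occupancy bound \eqref{eq:P_M_deviation}. Parts two and three are derived from the first by telescoping arguments.

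\textbf{First bound.} By \eqref{eq:pomdp} and \eqref{eq:MDP}, both predictive distributions have the form $\mu(o_{h+1})^\top(\cdot)$. They differ only in the feature that is paired with $\mu$:
\begin{itemize}
\item the POMDP uses $\xi_h(\eta_h,a_h)=\int\psi(s,a_h)\,b_h^{\mathcal P}(\eta_h)(s)\,ds$;
\item the approximate MDP uses $\phi_h(z_h,a_h)=\int\psi(s,a_h)\,\bar b_h(z_h)(s)\,ds$.
\end{itemize}
Both are obtained by pushing a belief through the same Markov kernel $s\mapsto P(\cdot\mid s,a_h)$ and then through $O$. Since a Markov kernel cannot increase total variation, for each fixed history
\[
\bigl\|p^{\mathcal M}(\cdot\mid z_h,a_h)-p^{\mathcal P}(\cdot\mid \eta_h,a_h)\bigr\|_1\le \bigl\|\bar b_h(z_h)-b_h^{\mathcal P}(\eta_h)\bigr\|_1 .
\]
It therefore suffices to bound the expected belief error under the history distribution induced by $\pi$.

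Both beliefs are produced by the same Bayes-filter recursion $U^{\mathcal P}$ over the last $L$ steps. They differ only in the prior at time $h-L$: the true belief $b_{h-L}^{\mathcal P}$ versus the reset prior $\tilde b_0^{h-L}$. The key input is the filter-stability result for $\gamma$-observable POMDPs (Golowich--Moitra--Rohatgi, used in \cite{guo2023provably}). It says that under Assumption~\ref{ass:observability}, the expected error of a filter started from a misspecified prior contracts geometrically in $L$, provided the prior has bounded likelihood ratio against the truth, e.g.\ a mixture with the uniform distribution. The expectation is over observations from the true process.

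Concretely, I will show the expected squared Hellinger or $\chi^2$-type potential decreases by a factor $1-\gamma^4/C$ per step. After $L$ steps this gives an error of order $d\,\exp(-c\gamma^4L)$. Choosing $L=O(\gamma^{-4}\log(d/\epsilon_1))$ makes this at most $\epsilon_1$.

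The factor $d$ instead of $S$ comes from the low-rank structure: the prior $\tilde b_0$ is chosen as in \cite{guo2023provably}, in the span of the $d$ features, so the initial divergence is $O(\log d)$. This contraction step is the main obstacle. I will invoke the cited filter-stability theorem rather than reprove it, and only check that the reset prior satisfies its hypothesis.

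\textbf{Occupancy bound \eqref{eq:P_M_deviation}.} I will use a hybrid argument. Under the same $\pi$, couple $\mathcal P$ and $\mathcal M$ step by step, and let hybrid $t$ follow $\mathcal M$'s predictive kernel for steps $<t$ and $\mathcal P$'s kernel afterward. Consecutive hybrids differ only at one step $t$. Their TV distance is at most the expected one-step discrepancy at that step, which the first bound controls by $\epsilon_1$. Summing over $t\le h$ gives $h\epsilon_1$.

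\textbf{Value bound \eqref{eq:V_P_M_deviation}.} The rewards $r_h(o_h)$ lie in $[0,1]$. I will write
\[
V^{\pi,\mathcal P}-V^{\pi,\mathcal M}=\sum_{h}\Bigl(\mathbb E_{d_h^{\pi,\mathcal P}}[r_h]-\mathbb E_{d_h^{\pi,\mathcal M}}[r_h]\Bigr).
\]
Each summand is bounded by $\|d_h^{\pi,\mathcal P}-d_h^{\pi,\mathcal M}\|_{\mathrm{TV}}\le h\epsilon_1$. Summing over $h\le H$ gives $\sum_h h\epsilon_1\le H^2\epsilon_1/2$ up to the boundary term; the exact constant $1/2$ depends on the range convention for $h$.

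The main technical point in the last two parts is that the first bound holds under the history distribution of $\mathcal P$. In each hybrid the prefix is generated by $\mathcal M$, so I need the one-step discrepancy under that mixed prefix. I will handle this by running the hybrid in the opposite order: true dynamics first and the surrogate afterward. Then each discrepancy is evaluated under a $\mathcal P$-generated prefix, exactly as the first bound requires.
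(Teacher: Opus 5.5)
Your proposal is correct and follows essentially the route of the source the paper cites for this lemma; the paper itself gives no proof. That route is: data processing reduces the predictive error to the belief error; the Golowich--Moitra--Rohatgi filter-stability bound is applied with a reset prior $\tilde b_0$ that dominates, up to a factor $d$, every state distribution of the form $\omega(\cdot)^\top v$ (for instance, the uniform mixture of a barycentric spanner of that set), and the true pre-observation law $\omega(\cdot)^\top\xi_{h-L-1}$ at time $h-L$ has this form, which is where the $\log d$ comes from; and the occupancy and value bounds follow by telescoping over $\mathcal P$-generated prefixes, exactly as in your reversed hybrid ordering. As a small tightening, using $\|\cdot\|_{\mathrm{TV}}=\tfrac12\|\cdot\|_1$ and the common law of $o_1$, your hybrid argument gives $\|d_h^{\pi,\mathcal P}-d_h^{\pi,\mathcal M}\|_{\mathrm{TV}}\le (h-1)\epsilon_1/2$, so the constant $H^2\epsilon_1/2$ in the value bound holds with no boundary slack.
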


We also give the {\it Simulation Lemma} commonly used in the related literature.
\begin{lemma} \cite{uehara2021representation} \label{lem:simulation_lemma}
Given two MDPs $(P', r+b)$ and $(P, r)$, for any policy $\pi$, we have
\begin{align}
&V^{\pi}_{P',\,r+b} - V^{\pi}_{P,\,r} 
\nonumber
\\& = \sum_{h=1}^{H} \mathbb{E}_{(s_h,a_h)\sim d^{\pi}_{P'}}
\Bigl[ b_h(s_h,a_h) + \mathbb{E}_{P'(s'_h\mid s_h,a_h)}
\!\left[V^{\pi}_{P,\,r,\,h+1}(s'_h)\right]
\nonumber
\\& - \mathbb{E}_{P(s'_h\mid s_h,a_h)} \left[V^{\pi}_{P,\,r,\,h+1}(s'_h)\right]
\Bigr],
\end{align}
and
\begin{align}
& V^{\pi}_{P',\,r+b} - V^{\pi}_{P,\,r} \nonumber
= \sum_{h=1}^{H}
\mathbb{E}_{(s_h,a_h)\sim d^{\pi}_{P,h}} \nonumber
\\& \Bigl[ b_h(s_h,a_h) +
\mathbb{E}_{P'(s'_h\mid s_h,a_h)} \left[V^{\pi}_{P',\,r+b,\,h+1}(s'_h)\right] \nonumber
\\&  - \mathbb{E}_{P(s'_h\mid s_h,a_h)}
\!\left[V^{\pi}_{P',\,r+b,\,h+1}(s'_h)\right]
\Bigr].
\end{align}
\end{lemma}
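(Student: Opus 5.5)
The statement immediately above is the Simulation Lemma (Lemma~\ref{lem:simulation_lemma}). Both identities follow from a telescoping argument over the horizon, using the Bellman equations of the two MDPs. The first identity rolls out under $d^{\pi}_{P'}$; the second rolls out under $d^{\pi}_{P}$, with the roles of the two value functions swapped. Throughout, write $V' := V^{\pi}_{P',r+b}$ and $V := V^{\pi}_{P,r}$, with stage-wise versions $V'_h, V_h$ and terminal conditions $V'_{H+1}=V_{H+1}=0$.

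For the first identity, I will define $\Delta_h(s) := V'_h(s) - V_h(s)$ and apply the Bellman equations at a pair $(s_h,a_h)$ with $a_h \sim \pi$. We have $V'_h(s_h) = \mathbb{E}_{a_h\sim\pi}[r_h + b_h + \mathbb{E}_{P'}V'_{h+1}]$ and $V_h(s_h) = \mathbb{E}_{a_h\sim\pi}[r_h + \mathbb{E}_{P}V_{h+1}]$. Adding and subtracting $\mathbb{E}_{P'(\cdot\mid s_h,a_h)}V_{h+1}$ gives
\[
\Delta_h(s_h) = \mathbb{E}_{a_h\sim\pi}\Bigl[b_h + \mathbb{E}_{P'}V_{h+1} - \mathbb{E}_{P}V_{h+1} + \mathbb{E}_{P'}\Delta_{h+1}\Bigr].
\]
The reward $r_h$ cancels. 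The last term is exactly the expectation of $\Delta_{h+1}$ under the next-state law of $P'$. So if I take expectations over $s_h \sim d^{\pi}_{P',h}$ and unroll from $h=1$ to $H$, the residual terms telescope into the sum of the bracketed quantities under $d^{\pi}_{P'}$. The remainder vanishes because $\Delta_{H+1}=0$.

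For the second identity, I will perform the symmetric decomposition. This time I add and subtract $\mathbb{E}_{P(\cdot\mid s_h,a_h)}V'_{h+1}$, which yields
\[
\Delta_h(s_h) = \mathbb{E}_{a_h\sim\pi}\Bigl[b_h + \mathbb{E}_{P'}V'_{h+1} - \mathbb{E}_{P}V'_{h+1} + \mathbb{E}_{P}\Delta_{h+1}\Bigr].
\]
Now the recursion propagates $\Delta_{h+1}$ along the true dynamics $P$. Unrolling under $d^{\pi}_{P,h}$ therefore gives the stated sum, with $V^{\pi}_{P',r+b,h+1}$ as the test function.

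In the POMDP/finite-memory setting of the paper, the same argument applies verbatim. One replaces $s_h$ by the information state $z_h$ (or by the history $\eta_h$ when comparing against $\mathcal P$), and the next state by the next observation together with the induced $z_{h+1}$. I expect no real obstacle. The only care needed is bookkeeping: making sure the occupancy measure used when unrolling matches the kernel that propagates $\Delta_{h+1}$ ($P'$ in the first form, $P$ in the second), and being consistent with the terminal-reward and indexing conventions. Integrability is automatic since rewards and bonuses are bounded and the horizon is finite.
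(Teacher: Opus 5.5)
Your proposal is correct: adding and subtracting $\mathbb{E}_{P'(\cdot\mid s_h,a_h)}V_{h+1}$ (resp.\ $\mathbb{E}_{P(\cdot\mid s_h,a_h)}V'_{h+1}$), then unrolling the recursion for $\Delta_h$ under the kernel that propagates it, is the standard telescoping proof of the simulation lemma. The paper gives no proof of its own and only cites \cite{uehara2021representation}, which uses the standard argument; the two hypotheses you rely on implicitly, a common initial distribution and a policy that is Markov in the (memory) state, are the setting in which the lemma is stated and applied.
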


We also have the $L$-step back inequality for the true and the learned model as shown in the following lemma.
\begin{lemma}\cite{guo2023provably} \label{lem:L_step_true_model}
Consider a set of functions $\{g_h\}_{h=0}^{H}$ that satisfies
\[ g_h \in \mathcal Z \times \mathcal A \to \mathbb R,
\quad \|g_h\|_\infty \le B,\ \forall\, h\in[H].
\]
Then, for any policy $\pi$, we have
\begin{align}
&\sum_{h=1}^{H}\mathbb E_{\pi}^{\mathcal P}\!\left[g(z_h,a_h)\right]\nonumber
\\&\le
\sum_{h=1}^{H}
\mathbb E_{ z_{h-L-1},\,a_{h-L-1}\sim\pi}^{\mathcal P} \notag\\&
\Biggl[
\left\|
\phi^{\top}(z_{h-L-1},a_{h-L-1})
\right\|_{\beta_{h-L-1}^{-1},\phi_{h-L-1}}
\notag\\&
\sqrt{
|A|^{L}k\cdot
\mathbb E_{(\tilde z_h,\tilde a_h)\sim \gamma_h}^{\mathcal P}
\!\left[g(\tilde z_h,\tilde a_h)^2\right]
+
B^2\lambda_k d
+
kB^2\epsilon_1
}
\Biggr]
+
B\epsilon_1 .
\end{align}
\end{lemma}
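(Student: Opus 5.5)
This is the $L$-step back inequality for the true model, imported from \cite{guo2023provably}. I would re-derive it through the standard one-step-back argument for low-rank MDPs, applied $L$ steps back. This is what makes the finite-memory window $z_h$ fully exposed to uniform exploration.

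\emph{Step 1: replace the POMDP by the approximate MDP.} Fix $h$. By \eqref{eq:P_M_deviation} in Lemma~\ref{lem:L_memory}, replacing $\mathbb E^{\mathcal P}_\pi[g(z_h,a_h)]$ by its counterpart under the approximate low-rank MDP $\mathcal M$ costs at most $B$ times a total-variation error. I would account for this through the additive $B\epsilon_1$ term, with $\epsilon_1$ rescaled per step; the $kB^2\epsilon_1$ term inside the root will also absorb the model error on the exploration data.

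\emph{Step 2: factorize through the feature at $h-L-1$.} The window $z_h$ depends only on $(o_{h-L:h},a_{h-L:h-1})$. By the low-rank structure \eqref{eq:MDP}, the law of $o_{h-L}$ given $(z_{h-L-1},a_{h-L-1})$ is $\mu(\cdot)^\top\phi_{h-L-1}(z_{h-L-1},a_{h-L-1})$. Hence
\[
\mathbb E_\pi[g(z_h,a_h)]=\mathbb E_\pi\bigl[\phi_{h-L-1}(z_{h-L-1},a_{h-L-1})^\top w_h\bigr],
\]
where
\[
w_h=\sum_{o_{h-L}}\mu(o_{h-L})\,\mathbb E_\pi\bigl[g(z_h,a_h)\mid o_{h-L}\bigr].
\]
Applying Cauchy--Schwarz with the matrix $\beta_{h-L-1}=k\,\mathbb E_{\rho}[\phi\phi^\top]+\lambda_k I$ gives
\[
\mathbb E_\pi[g]\le\mathbb E_\pi\|\phi\|_{\beta^{-1}}\cdot\|w_h\|_{\beta}.
\]
Here $\rho$ is the data distribution over $(z_{h-L-1},a_{h-L-1})$ induced by the exploration policies.

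\emph{Step 3: bound $\|w_h\|_\beta^2$.} Expanding gives
\[
\|w_h\|_\beta^2 = k\,\mathbb E_\rho[(\phi^\top w_h)^2]+\lambda_k\|w_h\|^2 .
\]
Normalization of $\mu$ (features in the unit ball, with $\int\|\mu\|\le\sqrt d$) yields $\lambda_k\|w_h\|^2\le B^2\lambda_k d$. For the first term, Jensen's inequality gives $(\phi^\top w_h)^2\le\mathbb E[g^2]$ under the law that runs $\rho$ up to $h-L-1$ and then $\pi$ for the next $L$ steps. Replacing $\pi$ over these $L+1$ actions by uniform actions, which is exactly how $\gamma_h$ (the round-1/round-2 data) is generated, costs an importance weight of at most $|A|^{L}$ after accounting for the extra action. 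This produces $|A|^Lk\,\mathbb E_{\gamma_h}[g^2]$, plus $kB^2\epsilon_1$ for moving between $\mathcal M$ and $\mathcal P$ under $\gamma_h$.

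\emph{Step 4: conclude.} Summing over $h$ gives the claimed inequality. The main obstacle is Step 3's importance-weighting: I must verify that conditioning on $o_{h-L}$ decouples the window from earlier history in $\mathcal M$, so that the exponent is exactly $L$ rather than $L+1$. My plan there is to use the fact that $z_h$ is measurable with respect to the last $L$ observations and $L-1$ actions, and to treat $a_h$ via $g$ evaluated under the uniform-action data design.
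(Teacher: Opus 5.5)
\textbf{Gap: the step-back cannot be done inside $\mathcal M$ by conditioning on $o_{h-L}$.} The decoupling you flag as the ``main obstacle'' is false, so Steps 2--3 do not go through.

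$\mathcal M$ is Markov only in the full window. Its kernel is $P^{\mathcal M}(o_{t+1}\mid z_t,a_t)=\mu(o_{t+1})^\top\phi_t(z_t,a_t)$, where $\phi_t$ is built from the belief $\bar b_t(z_t)$ filtered over the entire window. So for every $t$ from $h-L$ to $h-1$, the law of $o_{t+1}$ still depends on observations preceding $o_{h-L}$, and so does the action $a_t\sim\pi(\cdot\mid z_t)$. Your $w_h$ is therefore really $w_h(x)$, a function of the prefix $x=(z_{h-L-1},a_{h-L-1})$.

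Pointwise Cauchy--Schwarz with a prefix-dependent vector is harmless. Step 3, however, fails. For $x'$ drawn from the exploration data, $\phi(x')^\top w_h(x)$ is the expectation of $g$ under a hybrid process whose later \emph{observations} come from windows that splice $x$'s past onto the new observations. This process differs from the data-generating one in the observation law, not only in the action law. No importance weight over actions turns it into $\mathbb E[g^2]$ under the data. In short, your Step 1 ordering is backwards: the bottleneck that makes an $L$-step back inequality possible exists only in the true POMDP.

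\textbf{What works instead.} The argument behind \cite{guo2023provably} steps back in $\mathcal P$ through the latent state.
Given $x=(\eta_{h-L},a_{h-L})$, the state $s_{h-L+1}$ has law $\omega(\cdot)^\top\xi_{h-L}(x)$. Given $s_{h-L+1}$, the observations in $z_h$ depend on $x$ only through the policy's action probabilities.
Set $G_x(s)=\mathbb E^{\pi}[g(z_h,a_h)\mid s_{h-L+1}=s,\,x]$ and $w(x)=\int\omega(s)G_x(s)\,ds$. Apply Jensen, then the bound $\pi(\cdot\mid z_t)\le 1=|A|\,U(\cdot)$ on the $L$ actions $a_{h-L+1},\ldots,a_h$. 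This gives $G_x(s)^2\le|A|^L\,\mathbb E^{U}[g^2\mid s_{h-L+1}=s]$ uniformly in $x$; your Jensen-before-reweighting order is the right one.
Hence $k\,\mathbb E_{x'}[(\xi(x')^\top w(x))^2]\le|A|^Lk\,\mathbb E_{\rho_h}[g^2]$, where $x'$ is drawn from $\bar\pi_k$'s history law at step $h-L$ (whose $(z,a)$-marginal is $\gamma_{h-L}$).
Only afterwards do you swap $\xi$ for the window feature $\phi$. Use $|(\phi-\xi)^\top w|\le B\|\bar b_{h-L}-b_{h-L}\|_1$ together with the $\gamma$-observability belief bound behind Lemma~\ref{lem:L_memory}. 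This is the source of the additive $B\epsilon_1$ and of the $kB^2\epsilon_1$ under the root. Going through \eqref{eq:P_M_deviation} first would instead cost $O(BH^2\epsilon_1)$ in total.

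\textbf{Two bookkeeping fixes.} First, the action count. With the paper's window $z_h=(o_{h+1-L:h},a_{h+1-L:h-1})$, stepping back to index $h-L$ leaves exactly $L$ policy actions. Your shifted window with a step-back to $h-L-1$ leaves $L+1$ actions and honestly gives $|A|^{L+1}$.

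Second, the distributions. By definition, $\gamma_h$ is the occupancy of $\bar\pi_k$ \emph{without} uniform actions. So it cannot be what you reach after replacing $\pi$ by uniform actions. The root term must be under $\rho_h$ and the norm under $\gamma_{h-L}$, which is how the appendix actually invokes this lemma. The printed $\beta_{h-L-1}$/$\gamma_h$ pairing is not produced by this argument.
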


\begin{lemma} \cite{guo2023provably} \label{lem:L_step_learned_model}
Consider a set of functions $\{g_h\}_{h=0}^{H}$ that satisfies
\[
g_h \in \mathcal Z \times \mathcal A \to \mathbb R,
\|g_h\|_\infty \le B,\ \forall\, h\in[H].
\]
Then, for any policy $\pi$, we have
\begin{align}
&\sum_{h=1}^{H} \mathbb E_{\pi}^{\hat{\mathcal P}}\!\left[g(z_h,a_h)\right]
\notag
\\&
\le
\sum_{h=1}^{H}
\mathbb E_{\tilde z_{h-L-1},\,a_{h-L-1}\sim\pi}^{\hat{\mathcal P}}
\notag\\&
\Biggl[
\left\|
\tilde\phi^{\top}(z_{h-L-1},a_{h-L-1})
\right\|_{\rho_{h-L-1}^{-1},\,\hat\phi}
\notag
\\&
\sqrt{
|A|^{L}k\cdot
\mathbb E_{(\tilde z_h,\tilde a_h)\sim \rho_h}^{{\mathcal P}}
\!\left[g(\tilde z_h,\tilde a_h)^2\right]
+
B^2\lambda_k d
+
kB^2\epsilon_1
} + B\epsilon_1
\Biggr].
\end{align}
\end{lemma}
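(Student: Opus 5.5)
The plan is to follow the standard ``one-step back'' argument for low-rank MDPs, applied across a window of $L+1$ steps. The low-rank factorization of the learned model $\hat{\mathcal P}$ lets us pull the expectation back to the feature $\tilde\phi(z_{h-L-1},a_{h-L-1})$ at stage $h-L-1$. Everything that happens afterwards is absorbed into a single vector $w_h\in\mathbb R^d$, which Cauchy--Schwarz then controls in the $\rho_{h-L-1}$-weighted norm. Here $\rho_{h-L-1}=k\,\mathbb E_{\rho_{h-L-1}}[\tilde\phi\tilde\phi^\top]+\lambda_k I$ is the regularized empirical covariance built from the exploratory buffers $\mathcal D\cup\mathcal D'$.

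\emph{Step 1 (factorization).} Fix $h$. Because the transition at stage $h-L-1$ under $\hat{\mathcal P}$ equals $\hat\mu(o)^\top\tilde\phi(z_{h-L-1},a_{h-L-1})$, write
\[
\mathbb E^{\hat{\mathcal P}}_\pi[g(z_h,a_h)]=\mathbb E^{\hat{\mathcal P}}_{z_{h-L-1},a_{h-L-1}\sim\pi}\bigl[\tilde\phi(z_{h-L-1},a_{h-L-1})^\top w_h\bigr].
\]
In this identity, $w_h=\sum_{o}\hat\mu(o)\,G_h(o)$. The quantity $G_h(o)$ is the expected value of $g$ after rolling $\pi$ forward for the remaining $L$ steps under $\hat{\mathcal P}$, starting from the next observation $o$ and the history in the window. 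Since $|G_h|\le B$ and the learned features are normalized, we get $\|w_h\|_2\le B\sqrt d$.

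\emph{Step 2 (Cauchy--Schwarz).} Bound $|\tilde\phi^\top w_h|\le\|\tilde\phi\|_{\rho^{-1}_{h-L-1}}\|w_h\|_{\rho_{h-L-1}}$. Expanding the norm gives
\[
\|w_h\|^2_{\rho_{h-L-1}}=k\,\mathbb E_{\rho_{h-L-1}}\bigl[(\tilde\phi^\top w_h)^2\bigr]+\lambda_k\|w_h\|^2.
\]
The second term is at most $B^2\lambda_k d$.

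\emph{Step 3 (importance weighting and model transfer).} By Jensen's inequality, $(\tilde\phi^\top w_h)^2$ is at most the conditional expectation of $g^2$ after the $L$-step rollout. Because $\rho_h$ uses uniform actions over the last $L$ (respectively $2L$) steps, replacing $\pi$ by $\mathcal U(\mathcal A)$ on those steps costs an importance ratio of at most $|A|^L$. This yields $k|A|^L\,\mathbb E^{\hat{\mathcal P}}_{\rho_h}[g^2]$.

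It remains to switch this expectation from $\hat{\mathcal P}$ to the true $\mathcal P$. Here I would use the MLE guarantee for \eqref{eq:representation_learning}, which makes the one-step conditional distributions close in total variation on the data distributions. Combined with Lemma~\ref{lem:L_memory} to pass between the $L$-memory model and the POMDP, this gives a TV error of order $\epsilon_1$. Since $g^2\le B^2$, the switch costs at most $kB^2\epsilon_1$. The residual mismatch between the true history-conditioned prediction and the finite-memory feature in Step 1 contributes the additive $B\epsilon_1$.

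Summing over $h$ gives the claim. I expect the main obstacle to be Step 3, the change of measure from $\hat{\mathcal P}$ to $\mathcal P$ inside the rollout. The MLE bound controls the error only on the data distributions $\rho$, not along arbitrary $\pi$-trajectories. For this reason the argument has to keep the uniform-exploration suffix and the $2L$-cut dataset $\mathcal D'$ aligned with the window used in Step 1, so that the TV bound applies exactly where it is needed.
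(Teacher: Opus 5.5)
\textbf{The core factorization is taken over the wrong variable, so the importance-weighting step fails.} The paper does not prove this lemma; it imports it from \cite{guo2023provably}, so your proposal has to stand on its own against the $L$-step-back argument behind it. Your skeleton is the right template: pull back to stage $h-L-1$, apply Cauchy--Schwarz in the $\rho_{h-L-1}$-weighted norm, expand, then use Jensen, an $|A|^L$ importance ratio, and an MLE-based model transfer on the data. The problem is that Step~1 factors through the next \emph{observation}, $w_h=\sum_o\hat\mu(o)G_h(o)$, and this breaks Step~3.

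In this model the continuation after $o_{h-L}$ is not a function of $o_{h-L}$:
\begin{itemize}
\item under $\hat{\mathcal P}$ it depends on the posterior over the latent state, hence on the whole conditioning history;
\item under $\hat{\mathcal M}$ the next transition uses $\tilde\phi_{h-L}(z_{h-L},a_{h-L})$, and $z_{h-L}$ still contains $o_{h-2L+1:h-L-1}$ from the conditioning window;
\item the window actions of $\pi$ also depend on those old observations.
\end{itemize}
This leaves two cases. If $G_h$ depends on $o$ alone, the identity in Step~1 is false. If instead $G_h=G_h(o;z_{h-L-1})$, then for a data point $(\tilde z,\tilde a)\sim\rho_{h-L-1}$ the quantity $\tilde\phi(\tilde z,\tilde a)^\top w_h=\sum_o P^{\hat{\mathcal M}}(o\mid\tilde z,\tilde a)\,G_h(o;z_{h-L-1})$ is a hybrid. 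Its first observation is drawn from the data point, but its continuation comes from a different trajectory's posterior. That is not the law of any rollout started from the data. So Jensen and the $|A|^L$ ratio cannot turn $\mathbb E_{\rho_{h-L-1}}[(\tilde\phi^\top w_h)^2]$ into $\mathbb E_{\rho_h}[g^2]$, or into any other quantity the data covers.

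\textbf{The missing idea is to factor through the latent state, where the low-rank structure lives.} Under $\hat{\mathcal P}$, $P(s_{h-L}\mid\eta_{h-L-1},a_{h-L-1})=\hat\omega(s_{h-L})^\top\hat\xi_{h-L-1}(\eta_{h-L-1},a_{h-L-1})$. Conditional on $s_{h-L}$, future observations are independent of the past, and $(z_h,a_h)$ is built only from quantities generated after $s_{h-L}$; this is why the pull-back must cover the whole memory window. Hence $\mathbb E[g(z_h,a_h)\mid\eta_{h-L-1},a_{h-L-1}]=\hat\xi^\top w$ with $w=\int\hat\omega(s)F(s)\,ds$. Here $F(s)$ is the conditional mean of $g$ given $s_{h-L}=s$, with $\pi$'s window actions frozen as a function of the conditioning history.

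That residual dependence is harmless, because the bound on $\|w\|_{\rho}$ holds uniformly over such frozen rules. At a data point, $\hat\xi^\top w$ is a genuine rollout under a non-anticipating rule. Jensen and importance weighting therefore land on a data distribution with a longer uniform suffix, which is what $\mathcal D'$ covers. Replacing $\hat\xi$ by the finite-memory feature $\tilde\phi$ produces the additive $B\epsilon_1$. Inside the $k$-weighted expansion of $\|w\|_\rho^2$, the same replacement produces the $kB^2\epsilon_1$ term.

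\textbf{Your accounting of the model transfer is also wrong.} Lemma~\ref{lem:MLE} controls $\hat{\mathcal P}$ versus $\mathcal P$ only at level $\zeta_k$. Lemma~\ref{lem:L_memory} relates $\mathcal M$ to $\mathcal P$, not $\hat{\mathcal P}$ to $\mathcal P$. Combining them does not give a TV error of order $\epsilon_1$. The switch costs a $\zeta_k$-type term, which the displayed statement omits, not $kB^2\epsilon_1$.
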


The following two lemmas are also used in our analysis.
\begin{lemma}\cite{uehara2021representation}\label{lem:semidefinite_matrix}
Consider the following process. For $n=1,\ldots,N$, let
\[ M_n = M_{n-1} + G_n, \qquad M_0 = \lambda_0 I,
\]
where $G_n$ is a positive semi-definite matrix with eigenvalues upper-bounded by $1$. Then we have
\[ 2 \log \det(M_N) - 2 \log \det(\lambda_0 I) \ge
\sum_{n=1}^N \operatorname{Tr}\!\bigl(G_n M_{n-1}^{-1}\bigr).
\]
\end{lemma}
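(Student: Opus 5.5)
The plan is to prove the bound one step at a time and then telescope. Fix $n\in[N]$. Since $M_0=\lambda_0 I\succ 0$ and every $G_n\succeq 0$, each $M_{n-1}$ is positive definite, so its inverse square root $M_{n-1}^{-1/2}$ exists. Factor
\begin{equation*}
M_n = M_{n-1}^{1/2}\bigl(I + A_n\bigr)M_{n-1}^{1/2},
\qquad
A_n := M_{n-1}^{-1/2} G_n M_{n-1}^{-1/2}.
\end{equation*}
Multiplicativity of the determinant then gives
\begin{equation*}
\log\det M_n - \log\det M_{n-1} = \log\det(I+A_n) = \sum_j \log\bigl(1+x_j^{(n)}\bigr),
\end{equation*}
where $x_j^{(n)}\ge 0$ are the eigenvalues of the symmetric positive semidefinite matrix $A_n$.

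The key step is a scalar inequality applied eigenvalue by eigenvalue. For $x\in[0,1]$, concavity of $\log(1+x)$ gives $\log(1+x)\ge x\log 2\ge x/2$. In fact $\log(1+x)\ge x/2$ continues to hold up to $x\approx 2.51$. To use it, I need $x_j^{(n)}\le 1$. Since $M_{n-1}\succeq M_0=\lambda_0 I$ and $\|G_n\|_{\mathrm{op}}\le 1$,
\begin{equation*}
\|A_n\|_{\mathrm{op}} \le \|M_{n-1}^{-1}\|_{\mathrm{op}}\,\|G_n\|_{\mathrm{op}} \le 1/\lambda_0 .
\end{equation*}
This is at most $1$ in the regime $\lambda_0\ge 1$, which is the regime in which the lemma is used; the stated constant $2$ also tolerates somewhat smaller $\lambda_0$. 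This eigenvalue control is the only step where the hypothesis on $G_n$ enters, and it is the main point to verify. If $\lambda_0<1$ were needed, I would instead use $\log(1+x)\ge x/(1+x)$, which changes the constant.

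With the eigenvalue bound in hand,
\begin{equation*}
2\sum_j \log\bigl(1+x_j^{(n)}\bigr) \ge \sum_j x_j^{(n)} = \operatorname{Tr}(A_n) = \operatorname{Tr}\bigl(G_n M_{n-1}^{-1}\bigr),
\end{equation*}
where the last equality is the cyclic property of the trace. Summing the per-step inequality over $n=1,\dots,N$ makes the left-hand side telescope to $2\log\det M_N-2\log\det(\lambda_0 I)$, which is the claimed bound.
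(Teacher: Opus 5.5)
Your proof is correct and is the same argument as the standard proof behind the citation (the paper itself only cites it). That argument factors $M_n=M_{n-1}^{1/2}\bigl(I+M_{n-1}^{-1/2}G_nM_{n-1}^{-1/2}\bigr)M_{n-1}^{1/2}$, applies $\log(1+x)\ge x/2$ to the eigenvalues of $A_n$, uses cyclicity of the trace, and telescopes. Your caveat on $\lambda_0$ points to a hypothesis that is genuinely missing from the statement, not a weakness of your route: for $d=N=1$, $G_1=1$, $\lambda_0=0.1$, the left side $2\log 11\approx 4.8$ is below the right side $10$. Your bound $\|A_n\|_{\mathrm{op}}\le 1/\lambda_0$, combined with $\log(1+x)\ge x/2$ for $x\le x^\ast\approx 2.51$, yields exactly the sharp condition $\lambda_0\ge 1/x^\ast$.
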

\begin{lemma}\cite{uehara2021representation}\label{lem:matrix_eigenvalue}
Suppose $\operatorname{Tr}(G_n) \le B^2$, where $G_n$ is a positive semidefinite matrix with eigenvalues upper-bounded by $1$. Then
\[ 2 \log \det(M_N) - 2 \log \det(\lambda_0 I) \le d \log(1 + \frac{N B^2}{d \lambda_0}). \]
\end{lemma}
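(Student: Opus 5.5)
We prove the elliptical-potential statement of Lemma~\ref{lem:matrix_eigenvalue} by bounding $\log\det(M_N)$ through the trace of $M_N$ and the AM--GM inequality on its eigenvalues. Since the matrices live in $\mathbb{R}^{d\times d}$, write $M_N=\lambda_0 I+\sum_{n=1}^N G_n$.

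The first step is to control the trace. By linearity and the hypothesis $\operatorname{Tr}(G_n)\le B^2$,
\[
\operatorname{Tr}(M_N)=d\lambda_0+\sum_{n=1}^N\operatorname{Tr}(G_n)\le d\lambda_0+NB^2 .
\]
The assumption that the eigenvalues of $G_n$ are at most $1$ is not needed here; only positive semidefiniteness enters, and it guarantees that $M_N\succeq \lambda_0 I$ is positive definite.

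The second step converts the trace bound into a determinant bound. Let $\sigma_1,\dots,\sigma_d>0$ be the eigenvalues of $M_N$. By AM--GM,
\[
\det(M_N)=\prod_{j=1}^d\sigma_j\le\Bigl(\tfrac{1}{d}\sum_{j}\sigma_j\Bigr)^d=\Bigl(\tfrac{\operatorname{Tr}(M_N)}{d}\Bigr)^d\le\Bigl(\lambda_0+\tfrac{NB^2}{d}\Bigr)^d .
\]
Taking logarithms and subtracting $\log\det(\lambda_0 I)=d\log\lambda_0$ gives
\[
\log\det(M_N)-\log\det(\lambda_0 I)\le d\log\Bigl(1+\tfrac{NB^2}{d\lambda_0}\Bigr).
\]

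The last step is a matter of constants. Multiplying this bound by $2$ yields $2d\log(1+NB^2/(d\lambda_0))$ on the right-hand side, not the stated $d\log(1+NB^2/(d\lambda_0))$. The argument above therefore establishes the stated inequality only for $\log\det(M_N)-\log\det(\lambda_0 I)$, without the factor $2$ on the left. I do not see a way to obtain the stated constant.

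The AM--GM step is tight, for instance when every $G_n=(B^2/d)I$ with $B^2\le 1$. In that case the left-hand side of the statement as written exceeds its right-hand side by a factor $2$, so the statement appears to carry a typo in the constant. Either the right-hand side should read $2d\log(1+NB^2/(d\lambda_0))$, or the factors $2$ on the left should be dropped. This matches the version in \cite{uehara2021representation}, and the subsequent sample-complexity analysis is unaffected up to the absolute constant. With that correction the proof above is complete, and no step remains a substantive obstacle.
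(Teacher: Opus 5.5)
Your argument (bound $\operatorname{Tr}(M_N)\le d\lambda_0+NB^2$, then apply AM--GM to the eigenvalues of $M_N$) is the standard proof behind this cited result; the paper gives no proof of its own. Your diagnosis of the constant is also correct: already for $d=1$ and $G_n\equiv B^2\in(0,1]$ the printed left-hand side equals $2\log(1+NB^2/\lambda_0)$, twice the right-hand side, so what actually holds is $\log\det(M_N)-\log\det(\lambda_0 I)\le d\log(1+NB^2/(d\lambda_0))$. In the proof of Theorem~\ref{thm:distributed_team_pac}, where the paper chains this with Lemma~\ref{lem:semidefinite_matrix}, the corrected constant turns $d\log(1+K/(d\lambda_1))$ into $2d\log(1+K/(d\lambda_1))$, which is absorbed into the $O(\cdot)$ bounds, as you say.
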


% We define the following notations that will be used extensively in the analysis.

% For any $k$, we define $\bar{\pi}_k$ to be
% \[
% \bar{\pi}_k = \sum_{i=1}^k \pi^k / k.
% \]
% For a fixed $(k,h)$, $\rho_h^k$ is the distribution on $\mathcal{Z}\times\mathcal{A}$ induced by applying $\{\pi_i^k\}_{i=1}^k$ and then doing uniformly random actions for $L$ times.
% We define the distribution of $(z,a)$ for any $h$, denoted by $\rho_h^k \in \Delta(\mathcal{Z}\times\mathcal{A})$, as follows:
% \[
% \rho_h^k(z,a)=d^{\bar{\pi}_k \circ_L U(\mathcal{A})}_{\rho,h}(z,a).
% \]

% Similarly, we define the distribution on $\mathcal{Z}\times\mathcal{A}$ after doing uniformly random actions for $2L$ times.
% For any $k,h\ge 1$, we define $\beta_h^k$ as follows:
% \[
% \beta_h^k(z,a)=d^{\bar{\pi}_k \circ_{2L} U(\mathcal{A})}_{\rho,h}(z,a).
% \]

% We also define the distribution induced by $\bar{\pi}_k$.
% For any $k,h$, we also define $\gamma_h^k \in \Delta(\mathcal{S}\times\mathcal{A})$ as follows:
% \[
% \gamma_h^k(z,a)=d^{\bar{\pi}_k}_{\rho,h}(z,a).
% \]

% For notational simplification, we denote
% \[
% \|x\|_{\rho,\phi}=\|x\|_{\Sigma_{\rho,\phi}},
% \qquad
% \|x\|_{\rho^{-1},\phi}=\|x\|_{\Sigma_{\rho,\phi}^{-1}}
% \]
% for $x\in\mathbb{R}^d$ and $\phi\in\Phi$, where
% \[
% \Sigma_{\rho,\phi}
% =
% \mathbb{E}_{z,a\sim \rho}\!\left[\phi(z,a)\phi^\top(z,a)\right]+\lambda I.
% \]
% We define $\|\cdot\|_{\beta,\phi}$ and $\|\cdot\|_{\gamma,\phi}$ in the same way.

Next, we introduce several mixture-induced occupancy measures that will be repeatedly used in the subsequent analysis.
For each episode index $k$, define the averaged policy
\begin{align}
\bar{\pi}_k = \frac{1}{k}\sum_{\ell=1}^k \pi^{\ell} .
\end{align}
Based on $\bar{\pi}_k$, we define a family of distributions over $\mathcal Z \times \mathcal A$. In particular, for any pair $(k,h)$, let $\rho_h^k \in \Delta(\mathcal Z \times \mathcal A)$ denote the occupancy measure at stage $h$ induced by following $\bar{\pi}_k$ and subsequently taking uniformly random actions for $L$ steps, namely,
\[
\rho_h^k(z,a) = d_{\rho,h}^{\,\bar{\pi}_k \circ_L U(\mathcal A)}(z,a).
\]
Analogously, we define $\beta_h^k \in \Delta(\mathcal Z \times \mathcal A)$ as the corresponding occupancy measure associated with $2L$ additional uniformly random actions:
\[
\beta_h^k(z,a)
=
d_{\rho,h}^{\,\bar{\pi}_k \circ_{2L} U(\mathcal A)}(z,a).
\]
In addition, we let $\gamma_h^k$ denote the stage-$h$ occupancy measure generated directly by the averaged policy $\bar{\pi}_k$, without any appended random exploration. That is,
\[\gamma_h^k(z,a)= d_{\rho,h}^{\,\bar{\pi}_k}(z,a).
\]

Given any $x\in\mathbb R^d$ and $\phi\in\Phi$, define
\[ \|x\|_{\rho,\phi} = \|x\|_{\Sigma_{\rho,\phi}}, \quad
\|x\|_{\rho^{-1},\phi} = \|x\|_{\Sigma_{\rho,\phi}^{-1}},
\]
where the regularized second-moment matrix under distribution $\rho$ is given by
\[  \Sigma_{\rho,\phi} = \mathbb E_{(z,a)\sim \rho}\left[\phi(z,a)\phi^\top(z,a)\right]+\lambda I.
\]
The quantities $\|\cdot\|_{\beta,\phi}$ and $\|\cdot\|_{\gamma,\phi}$ are defined in the same manner.

%%%%%%%%%%%%%%%%%%%%%%%%%%%%%%%%%%%%%%%
\subsection{Main Results}
In the following, we study the relation between the optimal policies of the members and the team.
{
We first show the relation between the global information state and the local information state under the same approximate model.
}
\begin{lemma} \label{lem:shared_model_monotone_alt}
Fix a step $h$ and a member $i$. 
Let $\Delta_h$ denote the common information, and let $\Lambda_h^i$ and $\Lambda_h^{1:N}$ denote the private information of member $i$ and the collection of all members' private information, respectively. 
Define $\Lambda_h^{-i} = \Lambda_h^{1:N}\setminus \Lambda_h^i$.
All members hold a surrogate probabilistic model $\hat{\mathcal M}$.
Define the local and global predictive information states under the \emph{same} system model $\hat{\mathcal M}$ as
\begin{equation}
\Pi_h^{i}(s') = P^{\hat{\mathcal M}}(S_{h+1}=s'\mid \Delta_h, \Lambda_h^{i}),
\label{eq:Pi_local_def}
\end{equation}
\begin{equation}
\Pi_h^{g}(s') = P^{\hat{\mathcal M}}(S_{h+1}=s'\mid \Delta_h, \Lambda_h^{1:N}).
\label{eq:Pi_global_def}
\end{equation}

Then there exists a nonnegative reweighting function
\begin{equation} \label{eq:weights}
w_h^{i}(s') = P^{\hat{\mathcal M}}(\Lambda_h^{-i}\mid S_{h+1}=s',\Delta_h,\Lambda_h^i) \ge 0,
\end{equation}
such that the global predictive information state is obtained by a normalized positive reweighting of the local one
\begin{equation} \label{eq:reweighting_alt}
\Pi_h^{g}(s') = \frac{w_h^{i}(s')\,\Pi_h^{i}(s')}
{\sum_{\bar s\in\mathcal S} w_h^{i}(\bar s)\,\Pi_h^{i}(\bar s)}.
\end{equation}
% In particular, the global predictive information state is absolutely continuous with respect to the local one: $\Pi_h^{g}(s')=0$ whenever $\Pi_h^{i}(s')=0$. Hence $\Pi_h^{g}$ is uniquely determined by the local information state $\Pi_h^{i}$ together with the missing-information likelihood $w_h^{i}$.
% Moreover, for each $s'\in\mathcal S$, $\Pi_h^{g}(s')$ is strictly increasing in the corresponding local mass $\Pi_h^{i}(s')$ (holding $w_h^{i}$ fixed), with
% \begin{equation} \label{eq:mono}
% \frac{\partial \Pi_h^{g}(s')}{\partial \Pi_h^{i}(s')}
% = \frac{w_h^{i}(s')} {\sum_{\bar s\in\mathcal S} w_h^{i}(\bar s),  \Pi_h^{i}(\bar s)} > 0.
% \end{equation}
\end{lemma}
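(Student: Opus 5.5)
This is a direct application of Bayes' rule under the single joint law $P^{\hat{\mathcal M}}$. The plan is to condition the global predictive information state on the decomposition $\Lambda_h^{1:N}=(\Lambda_h^i,\Lambda_h^{-i})$. Because every quantity in \eqref{eq:Pi_local_def}, \eqref{eq:Pi_global_def} and \eqref{eq:weights} is a conditional probability under the same surrogate model $\hat{\mathcal M}$, no cross-model comparison is needed. This is exactly why the lemma insists on the shared model.

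First I fix a realization of $(\Delta_h,\Lambda_h^i,\Lambda_h^{-i})$ that has positive probability under $\hat{\mathcal M}$. Positivity is needed so that all conditionals are well defined. Since $\mathcal S$, the observation spaces and the action spaces are finite, these conditionals are ratios of probability masses.

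Next I write the global state as
\begin{equation*}
\Pi_h^{g}(s') = P^{\hat{\mathcal M}}(S_{h+1}=s'\mid \Delta_h,\Lambda_h^i,\Lambda_h^{-i}).
\end{equation*}
I apply Bayes' rule, treating $\Lambda_h^{-i}$ as the new evidence and $(\Delta_h,\Lambda_h^i)$ as the conditioning context. This gives
\begin{equation*}
\Pi_h^{g}(s') = \frac{P^{\hat{\mathcal M}}(\Lambda_h^{-i}\mid S_{h+1}=s',\Delta_h,\Lambda_h^i)\,P^{\hat{\mathcal M}}(S_{h+1}=s'\mid \Delta_h,\Lambda_h^i)}{P^{\hat{\mathcal M}}(\Lambda_h^{-i}\mid \Delta_h,\Lambda_h^i)}.
\end{equation*}
The numerator is exactly $w_h^i(s')\Pi_h^i(s')$. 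For the denominator, I expand it by the law of total probability over $S_{h+1}$:
\begin{equation*}
P^{\hat{\mathcal M}}(\Lambda_h^{-i}\mid \Delta_h,\Lambda_h^i)=\sum_{\bar s\in\mathcal S} w_h^i(\bar s)\,\Pi_h^i(\bar s).
\end{equation*}
This yields \eqref{eq:reweighting_alt}. Nonnegativity of $w_h^i$ is immediate because it is a conditional probability.

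The only delicate point is well-definedness, and I expect it to be the main (mild) obstacle. Two cases need handling. When $\Pi_h^i(s')=0$, the conditional $w_h^i(s')$ is undefined. I set it to an arbitrary value in $[0,1]$ (say $0$); this is harmless because it is multiplied by zero. The normalizer equals the marginal likelihood $P^{\hat{\mathcal M}}(\Lambda_h^{-i}\mid\Delta_h,\Lambda_h^i)$, which is strictly positive precisely because the realization was assumed to have positive probability. So the ratio in \eqref{eq:reweighting_alt} is valid.

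Finally, I would remark that $S_{h+1}$ may be replaced by any latent variable, and the argument never uses the low-rank structure. The low-rank structure matters only later, in guaranteeing that all members share the same $\hat{\mathcal M}$ learned from $\Delta_h$.
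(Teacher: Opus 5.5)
Your proof is correct and follows the paper's argument: Bayes' rule with $\Lambda_h^{-i}$ as the new evidence given $(\Delta_h,\Lambda_h^i)$, the numerator identified as $w_h^i(s')\,\Pi_h^i(s')$, and the denominator expanded by total probability over $S_{h+1}$. Your handling of well-definedness is more careful than the paper's, which simply asserts that every $w_h^i(s')$ is strictly positive; you instead fix a positive-probability realization and assign an arbitrary value to $w_h^i(s')$ wherever $\Pi_h^i(s')=0$, which is all the argument needs.
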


\begin{proof}
Under the fixed surrogate model $\hat{\mathcal M}$, the probabilities $P^{\hat{\mathcal M}}(\Lambda_h^{-i}\mid \Delta_h,\Lambda_h^i)>0$ and
$P^{\hat{\mathcal M}}(\Lambda_h^{-i}\mid S_{h+1}=s',\Delta_h, \Lambda_h^i)>0$ for all $s'\in\mathcal S$.
 In particular, the weights in~\eqref{eq:weights} are nonnegative and the denominator in~\eqref{eq:reweighting_alt} is strictly positive.

Conditioning on all members' information $(\Delta_h,\Lambda_h^{i},\Lambda_h^{-i})$ and applying Bayes' rule to $\Lambda_h^{-i}$ given the local conditioning $(\Delta_h,\Lambda_h^{i})$,
\begin{equation}
\begin{aligned}
&P^{\hat{\mathcal M}}(S_{h+1}=s'\mid \Delta_h,\Lambda_h^{i},\Lambda_h^{-i}) \\
&= \frac{ P^{\hat{\mathcal M}}(\Lambda_h^{-i}\mid S_{h+1}=s',\Delta_h,\Lambda_h^{i})\,
P^{\hat{\mathcal M}}(S_{h+1}=s'\mid \Delta_h,\Lambda_h^{i})
}{ P^{\hat{\mathcal M}}(\Lambda_h^{-i}\mid \Delta_h,\Lambda_h^{i}) },
\end{aligned}
\label{eq:bayes_step}
\end{equation}
where, by the definitions of $w_h^{i}$ and $\Pi_h^{i}$, the numerator equals $w_h^{i}(s')\,\Pi_h^{i}(s')$.
The normalizing constant in the denominator follows from the law of total probability over $S_{h+1}$,
\begin{equation}
\begin{aligned}
&P^{\hat{\mathcal M}}(\Lambda_h^{-i}\mid \Delta_h,\Lambda_h^{i})
\\&= \sum_{\bar s\in\mathcal S}  P^{\hat{\mathcal M}}(\Lambda_h^{-i}\mid S_{h+1}=\bar s,\Delta_h,\Lambda_h^{i}) P^{\hat{\mathcal M}}(S_{h+1}=\bar s\mid \Delta_h,\Lambda_h^{i})
\\&= \sum_{\bar s\in\mathcal S} w_h^{i}(\bar s)\,\Pi_h^{i}(\bar s).
\end{aligned}
\label{eq:norm_step}
\end{equation}
Substituting the numerator of~\eqref{eq:bayes_step} and the denominator~\eqref{eq:norm_step} yields~\eqref{eq:reweighting_alt}. 
%
% By Bayes' rule under the same approximated
% model $\hat{\mathcal M}$,
% \begin{equation}
% \begin{aligned}
% &P^{\hat{\mathcal M}}(S_{h+1}=s'\mid \Delta_k,\Lambda_h^{i},\Lambda_k^{-i}) \\& = \frac{ P^{\hat{\mathcal M}}(\Lambda_k^{-i}\mid S_{h+1}=s',\Delta_k,\Lambda_h^{i})
% P^{\hat{\mathcal M}}(S_{h+1}=s'\mid \Delta_k,\Lambda_h^{i})
% }{ P^{\hat{\mathcal M}}(\Lambda_k^{-i}\mid \Delta_k,\Lambda_h^{i}) }.
% \end{aligned}
% \end{equation}
% Noting that
% \begin{equation}
% \begin{aligned}
% &\ P^{\hat{\mathcal M}}(\Lambda_k^{-i}\mid \Delta_k,\Lambda_h^{i})
% \\&= \sum_{\bar s\in\mathcal S}  P^{\hat{\mathcal M} }(\Lambda_k^{-i}\mid S_{h+1}=\bar s,\Delta_k,\Lambda_h^{i})\;
%  P^{\hat{\mathcal M}}(S_{h+1}=\bar s\mid \Delta_k,\Lambda_h^{i}),
% \end{aligned}
% \end{equation}
% we obtain~\eqref{eq:reweighting_alt}. The stated derivative follows by direct differentiation with $w_h^{i}$ fixed.
\end{proof}
{
The following theorem should be interpreted conditionally on the learned surrogate model. The representation learning phase first estimates the surrogate model from delayed common information. Once this model is fixed, the subsequent planning phase becomes a team decision problem with a known probabilistic model, to which the structural
results of \cite{malikopoulos2022team} apply after verifying that the induced information states satisfy the hypotheses of \cite[Theorem~7]{malikopoulos2022team}.}

\begin{theorem} \label{theo:ctde_consistency}
At iteration $k$, denote $V^{\pi, \hat{\mathcal{M}}_k, r+\tilde{b}_k}$ as the value function obtained by the global information using value iteration under the approximate system model $\hat{\mathcal{M}}_k$, $ \pi^{i}_k $ is obtained by Algorithm~\ref{alg:ma-lsvi-jr} with the local information state, then
\begin{equation} \label{eq:global_local_consist}
\{ \pi^{i}_k \}_{i=1}^{N}  \in \arg\max_{\pi}\, V^{\pi, \hat{\mathcal{M}}_k,\,r+\tilde{b}_k}
\end{equation}
% Then each member's local dynamic programming on its predictive information state $\Pi_t^i$ corresponds to the same centralized objective, 
The optimal local decision rule is a best response with respect to the centralized team return conditioned on the global information state $\Pi_k$.
\end{theorem}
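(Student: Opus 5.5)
The plan is to condition on the learned surrogate $\hat{\mathcal M}_k$ and the bonus $\tilde b_k$. Once these are fixed, the planning problem at iteration $k$ is a finite-horizon team problem with a known probabilistic model and a known reward $r+\tilde b_k$. The reward is common to all members because $\tilde b_k$ is built from the shared $\widehat\phi_k$ and $\Sigma$, which every member computes identically from the same delayed common buffer. I will therefore reduce the claim to the structural result of \cite[Theorem~7]{malikopoulos2022team}. That theorem states that when each member's information state is a sufficient statistic compatible with the manager's (global) information state, the member-side dynamic program yields the corresponding component of a team-optimal policy. The proof thus consists of verifying its hypotheses for the information states $\Pi_h^i$ and $\Pi_h^g$ of Lemma~\ref{lem:shared_model_monotone_alt}, and then identifying Algorithm~\ref{alg:ma-lsvi-jr} with the member-side dynamic program.

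\textbf{Step 1: information states.} Under $\hat{\mathcal M}_k$, $\Pi_h^g$ is a valid information state for the manager. It is a function of $(\Delta_h,\Lambda_h^{1:N})$. It evolves recursively through the low-rank kernel $\hat\omega_k^\top\hat\psi_k$ and the belief operator $U^{\mathcal P}$. By \eqref{eq:MDP}, it also determines the predictive distribution of $o_{h+1}$ and hence the expected stage reward $r_h+\hat b_{k,h}$. The same properties hold for $\Pi_h^i$ with respect to $(\Delta_h,\Lambda_h^i)$, since $(\widehat\phi^i_{k,h},\widehat\mu^i_k)=q^i(\hat\omega_k,\hat\psi_k)$ uses the same dynamics and only changes the conditioning. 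Lemma~\ref{lem:shared_model_monotone_alt} then supplies the required compatibility: $\Pi_h^g$ is obtained from $\Pi_h^i$ by a normalized positive reweighting with weights $w_h^i$. These weights depend only on the other members' private information under the \emph{same} model. This is exactly the relation between member and manager information states assumed in \cite{malikopoulos2022team}.

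\textbf{Step 2: Bellman recursion.} I will show by backward induction on $h$, starting from $V_H=0$, that the manager's value satisfies $V_h^g(\Pi_h^g)=\max_{a^i}\mathbb E[\,Q_h^g\mid \Delta_h,\Lambda_h^i]$. The right-hand side has the form of the $Q^i_h$ recursion in Algorithm~\ref{alg:ma-lsvi-jr}. The argument is that $\mathbb E[\cdot\mid \Delta_h,\Lambda_h^i]=\mathbb E[\mathbb E[\cdot\mid \Delta_h,\Lambda_h^{1:N}]\mid \Delta_h,\Lambda_h^i]$ (tower property), and that the inner expectation is linear in $\Pi_h^g$, hence expressible through $\Pi_h^i$ and $w_h^i$. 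Because the maximization in Algorithm~\ref{alg:ma-lsvi-jr} is over member $i$'s own action, the other members' components are held at their current team-optimal rules. Consequently, $\pi^i_k$ is a person-by-person best response to the centralized return conditioned on $\Pi_k$, which is the last sentence of the statement.

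\textbf{Step 3: from person-by-person to team optimality.} Applying \cite[Theorem~7]{malikopoulos2022team} converts person-by-person optimality into \eqref{eq:global_local_consist}. I expect this to be the main obstacle, because in general a person-by-person optimum need not be team optimal. I will rely on the specific conclusion of that theorem, namely that member solutions coincide with components of a manager solution. This requires checking that each $\Pi_h^i$ is a sufficient compression of member $i$'s information that is not refined by the manager's additional data in a way that changes $\arg\max_{a^i}$. To address this, I will first try to show that the reweighting in \eqref{eq:reweighting_alt}, being strictly positive, preserves the maximizer of the conditional $Q$-function. If that fails, I will state the theorem under the hypotheses of \cite{malikopoulos2022team} as standing assumptions on $\hat{\mathcal M}_k$.
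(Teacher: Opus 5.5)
Your overall route is the paper's: fix the learned surrogate $\hat{\mathcal M}_k$ and the common bonus, observe that all members plan under the same known model, use Lemma~\ref{lem:shared_model_monotone_alt} as the manager--member compatibility relation, and invoke \cite[Theorem~7]{malikopoulos2022team}. The paper does no more than check three hypotheses and then read off \eqref{eq:global_local_consist}:
\begin{enumerate}
\item all members use a common model;
\item the member information state depends only on $(c_h,p_h^i)$ and not on member $i$'s own control strategy, justified via \cite[Theorem~5]{malikopoulos2022team};
\item the manager's state is recoverable from the member's through the reweighting.
\end{enumerate}
Your Step~1 says the state is a function of the data and evolves recursively. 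That is not the same as the second hypothesis, so you should state and check strategy-independence explicitly, since Theorem~7 needs it.

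The two places where you go beyond the paper would not work as written.
\begin{itemize}
\item In Step~2, the identity $V_h^g(\Pi_h^g)=\max_{a^i}\mathbb E[Q_h^g\mid\Delta_h,\Lambda_h^i]$ cannot hold in general. The left side varies with $\Lambda_h^{-i}$ and the right side does not. The tower property lets you pass conditional expectations through, but not exchange $\mathbb E[\max(\cdot)\mid\cdot]$ with $\max\mathbb E[\cdot\mid\cdot]$. At most you obtain a person-by-person condition, with $\mathbb E[\,\cdot\mid\Delta_h,\Lambda_h^i]$ applied on both sides and the other members' rules held fixed.
\item Your first attempt in Step~3 fails: a strictly positive reweighting does not preserve maximizers. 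Take two states and two actions with $Q(s_1,a_1)=Q(s_2,a_2)=1$ and $Q(s_1,a_2)=Q(s_2,a_1)=0$. With $\Pi_h^i=(0.6,0.4)$ the maximizer is $a_1$. After reweighting by $w_h^i=(1,2)$ the state becomes $(3/7,4/7)$, and the maximizer is $a_2$.
\item In addition, $w_h^i$ depends on $\Lambda_h^{-i}$, which member $i$ does not observe. So Lemma~\ref{lem:shared_model_monotone_alt} alone cannot let member $i$ reproduce the manager's argmax.
\end{itemize}
The passage from person-by-person to team optimality therefore cannot be derived from the reweighting. It is exactly what \cite[Theorem~7]{malikopoulos2022team} supplies once its hypotheses are verified, which is why the paper delegates to it. Your fallback is thus the paper's argument. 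You can drop Step~2 and the first attempt in Step~3, or keep Step~2 only in its weaker conditional-expectation form as a motivation for the last sentence of the statement.
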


\begin{proof}
At iteration $k$, Algorithm~1 learns the low-rank dynamics
representation $(\hat{\omega}_k,\hat{\psi}_k)$ from the delayed
common information by solving the maximum-likelihood problem in~\eqref{eq:representation_learning}. Consequently, all members share the same learned latent
dynamics representation and therefore the same surrogate
probabilistic model, which we denote by $\hat{M}_k$. During the
subsequent policy computation, this surrogate model is fixed, and
thus the dynamic programming recursion is carried out with respect
to a known probabilistic model.

Each member then applies the local mapping
\[
(\hat{\phi}_{k,h}^{\,i},\hat{\mu}_k^{\,i})
=
q^{i}(\hat{\omega}_k,\hat{\psi}_k),
\]
which combines the shared surrogate dynamics with the information
available to member $i$, namely the delayed common information
$c_h$ and its private information $p_h^{i}$. Hence, the resulting
member-side information state is
\[
\bar b_h^{\,i}(z_h^{\,i}),
\qquad
z_h^{\,i}=(c_h,p_h^{\,i}),
\]
and is computed under the common surrogate model $\hat M_k$.

Since the surrogate dynamics are fixed during the planning stage,
the member-side information state depends only on the available
information $(c_h,p_h^{\,i})$ and not on member $i$'s own control
strategy. Therefore, the information state satisfies the structural
property established in \cite[Theorem~5]{malikopoulos2022team}. The
difference among members lies only in the conditioning information,
not in the underlying probabilistic model.

Furthermore, Lemma~7 establishes that, under the same surrogate
model, the manager's predictive information state is obtained from
the member's predictive information state through a normalized
positive reweighting,
\[
\Pi_h^{g}(s')
=
\frac{w_h^{i}(s')\,\Pi_h^{i}(s')}
{\sum_{\bar s}w_h^{i}(\bar s)\Pi_h^{i}(\bar s)}.
\]
This is precisely the relationship required in
\cite[Lemma~7]{malikopoulos2022team} to recover the manager's
information state from each member's information state.

Consequently, once the surrogate model has been learned, all
assumptions required by \cite[Theorem~7]{malikopoulos2022team} are
satisfied. In particular,

\begin{enumerate}
\item all members plan with respect to the same probabilistic model
      $\hat M_k$;

\item each member possesses an information state that is
      independent of its own control strategy and evolves according
      to a strategy-independent recursion;

\item the manager's information state is recoverable from the
      corresponding member's information state.
\end{enumerate}

Therefore, the manager--member equivalence established in
\cite[Theorem~7]{malikopoulos2022team} applies directly to the surrogate
team problem. The member-side Bellman recursion solved by
Algorithm~2 therefore yields exactly the corresponding component of
the manager's optimal separated control law.

Specifically, member $i$ evaluates
\[
\hat P^{\,i}(o_{h+1}^{\,i}\mid z_h^{\,i},a_h^{\,i})
=
\hat\mu_k^{\,i}(o_{h+1}^{\,i})^\top
\hat\phi_{k,h}^{\,i}(z_h^{\,i},a_h^{\,i}),
\]
which is induced by the common surrogate model $\hat M_k$ and the
member's local information state. The Bellman recursion therefore
computes the conditional expected team return under the same
surrogate model used by the manager, differing only in the
conditioning information.

Hence the collection of member-side policies satisfies
\[
\{\pi_k^{\,i}\}_{i=1}^{N}
\in
\arg\max_{\pi}
V^{\pi,\hat M_k,r+\tilde b_k},
\]
which establishes the desired global--local equivalence.
\end{proof}

{
\begin{remark}\label{remark:inconsistency}
The consistency result relies on the exact Bellman backup, where the expectation is taken with respect to the true observation distribution over all possible observations. In our implementation of the proposed Algorithm~\ref{alg:ma-porl-jr}, this expectation is approximated by an
empirical average over Monte Carlo samples stored in the replay buffer. As a result, the consistency is approximate rather than exact, and the discrepancy depends on the finite-sample estimation error and the coverage of the collected data.
\end{remark}
}

Because of the global-local equivalence shown in Theorem~\ref{theo:ctde_consistency}, next, we focus on the optimality error of the solution obtained by the global information state.
The following theorem shows the sample complexity of the proposed decentralized algorithm. 
\begin{theorem} \label{thm:distributed_team_pac}
Let $\delta, \epsilon \in (0,1)$ be given.  
For each episode $k\in\{1,\dots,K\}$, let member $i$'s local policy obtained by Algorithm~\ref{alg:ma-lsvi-jr} be
\begin{equation}
\pi^i_k \in \Pi^i.
\end{equation}
The induced team policy is defined by
\[
\pi_k = (\pi^1_k,\dots,\pi^N_k).
% \quad
% \Pi^{\mathrm{team}} = \Pi^1 \times \cdots \times \Pi^N.
\]
Let $ \bar{\pi} = \frac{1}{K}\sum_{\ell=1}^K \pi_{\ell}$  be the uniform mixture of $\pi_1,\dots,\pi_{K}$, and let
\[ \pi^\star \in \arg\max_{\pi } V^{\mathcal{P},\pi,r} \]
be the optimal centralized team policy.

Choose the parameters as
\begin{align} \label{eq:parameters}
&\alpha_k = \tilde{\Theta}\!\left(\sqrt{k|\mathcal A|^L \zeta_k + \lambda_k d  }\right),\nonumber
\\&
\lambda_k = \Theta\!\left(d\log(|\mathcal F|k(H-n)/\delta)\right),
\nonumber
\\&
\epsilon_1 = \Theta\!\left(
\frac{\epsilon}{
H^2 d^{1/2}\gamma^{-4}\log(1/\epsilon)\log(dH|\mathcal F|/\delta)^{1/2}}
\right), \nonumber
\\&
L = \Theta\!\left(\gamma^{-4}\log(d/\epsilon_1)\right), \nonumber
\\& 
\zeta_k = \Theta\!\left(\frac{\log(|\mathcal F|k(H-n)/\delta)}{k(H-n)}\right).
\end{align}

% [MODIFIED]: Added the irreducible delay penalty to the final sub-optimality bound.
Then, with probability at least $1-\delta$, we have
\begin{equation}
V^{\mathcal{P},\pi^\star,r} - V^{\mathcal{P},\bar{\pi},r} \le \epsilon + O\Big( n\sqrt{ \frac{|A|^L}{d(H-n)  }+d }\Big),
\end{equation}
after
\begin{align}
&HK \nonumber\\& = O\Big(
\frac{H^5}{\epsilon^2}
\Big( \frac{|A|^{2L}d^2}{H-n} + |A|^L d^4 \Big)
\log^2\Big(
\frac{d(H-n)|\mathcal F|}{\delta} \frac{H^4}{\epsilon^2}
\nonumber\\&
\quad (\frac{|A|^{2L}d^2}{H-n} + |A|^L d^4)
\Big)
\nonumber\\
& \quad +
\frac{HL^2}{\epsilon^2}
\Big( \frac{|A|^L}{H-n}+d^2 \Big)
\log\!(
\frac{d(H-n)|\mathcal F|}{\delta}
\cdot
\frac{L^2}{\epsilon^2}
\nonumber\\
&\quad ( \frac{|A|^L}{H-n}+d^2 ) )
\Big).
\end{align}
episodes of interaction with the environment.
\end{theorem}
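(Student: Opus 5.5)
We follow the optimism-based analysis of \cite{guo2023provably,uehara2021representation}, adapted to the delayed-sharing structure, and use Theorem~\ref{theo:ctde_consistency} to reduce the decentralized policies to the centralized one. For each $k$, Theorem~\ref{theo:ctde_consistency} gives that $\pi_k=(\pi_k^1,\dots,\pi_k^N)$ maximizes $V^{\pi,\hat{\mathcal M}_k,r+\tilde b_k}$. It therefore suffices to bound $\sum_{k=1}^K\bigl(V^{\mathcal P,\pi^\star,r}-V^{\mathcal P,\pi_k,r}\bigr)$ for a centralized optimistic planner. We split each term as
\begin{align*}
&V^{\mathcal P,\pi^\star,r}-V^{\mathcal P,\pi_k,r}\\
&=\bigl(V^{\mathcal P,\pi^\star,r}-V^{\hat{\mathcal M}_k,\pi^\star,r+\tilde b_k}\bigr)
+\bigl(V^{\hat{\mathcal M}_k,\pi^\star,r+\tilde b_k}-V^{\hat{\mathcal M}_k,\pi_k,r+\tilde b_k}\bigr)
+\bigl(V^{\hat{\mathcal M}_k,\pi_k,r+\tilde b_k}-V^{\mathcal P,\pi_k,r}\bigr).
\end{align*}
The middle term is $\le 0$ by Theorem~\ref{theo:ctde_consistency}. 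Passing between $\mathcal P$ and the $L$-memory MDP $\mathcal M$ costs $H^2\epsilon_1/2$ per policy by Lemma~\ref{lem:L_memory}, with $L=\Theta(\gamma^{-4}\log(d/\epsilon_1))$.

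The first step is an MLE generalization bound for \eqref{eq:representation_learning}. Using realizability (Assumption~\ref{ass:model_class}) and a union bound over $\mathcal F$, $k$, and the $H-n$ stages that are actually shared, we show that with probability $1-\delta$,
\[
\mathbb E_{\rho_h^k\cup\beta_h^k}\bigl\|\hat P_k(\cdot\mid z,a)-P^{\mathcal M}(\cdot\mid z,a)\bigr\|_1^2\lesssim \zeta_k+\epsilon_1
\]
for $h\le H-n$. Observability (Assumptions~\ref{ass:observability}--\ref{ass:factorized_joint_gamma_obs}) lets the joint decoder replace the single-agent one without changing $d$.

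The second step is optimism. We apply the second identity of Lemma~\ref{lem:simulation_lemma} under $\pi^\star$ and Lemma~\ref{lem:L_step_learned_model} with $g$ equal to the model error. With $\alpha_k$ chosen as in \eqref{eq:parameters}, the bonus $\hat b_{k,h}$ dominates the one-step error. This gives a first term $\le O(H^2\epsilon_1)$ for $h\le H-n$.

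For $h>H-n$ the bonus is frozen at $\hat b_{k,H-n}$ and no fresh data are available, so optimism can fail at each of these $n$ stages. We bound each such stage's deficit by the $L$-step-back quantity with no $\zeta_k$ decay. Combining the elliptical potential bound with the normalization by $H-n$ stages, this is $O(\sqrt{|A|^L/(d(H-n))+d})$ per stage. Summing over the $n$ stages gives the irreducible term $O(n\sqrt{|A|^L/(d(H-n))+d})$.

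For the third term, the first identity of Lemma~\ref{lem:simulation_lemma} under $\pi_k$ bounds it by $\sum_h \mathbb E^{\mathcal P}_{\pi_k}[\hat b_{k,h}+\text{model error}]$. We apply Lemma~\ref{lem:L_step_true_model} to move each expectation back $L+1$ steps to $\gamma^k$-weighted elliptical norms $\|\phi\|_{\Sigma^{-1}_{\rho_h^k,\phi}}$, times $\sqrt{|A|^L k\zeta_k+\lambda_k d+k\epsilon_1}$. Summing over $k$, Cauchy--Schwarz and Lemmas~\ref{lem:semidefinite_matrix}--\ref{lem:matrix_eigenvalue} give $\sum_k\|\phi\|_{\Sigma^{-1}}\lesssim\sqrt{Kd\log(1+K/(d\lambda))}$. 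Since $\pi_k$ is greedy for the bonus-augmented model, concentration of the empirical $\Sigma$ to its population counterpart at rate $\lambda_k$ lets us replace the empirical covariance.

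Collecting terms gives
\[
\tfrac1K\sum_k(\cdot)\lesssim H^2\epsilon_1+\sqrt{\tfrac{H^3 d^2(|A|^{2L}/(H-n)+|A|^Ld^2)\log^2(\cdot)}{K}}+\text{delay term}.
\]
Setting this equal to $\epsilon$ and solving for $K$ gives the stated $HK$. The second summand in the sample complexity comes from the $L^2$ overhead of the $L$-step-back lemma applied in round 2, the $\beta$-distribution with cut at $h-2L$.

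The main obstacle is controlling the last $n$ stages. The bonus $\hat b_{k,H-n}$ is not a valid upper confidence bound there, so we must bound the stagewise gap via the same elliptical quantity without the $k\zeta_k$ shrinkage. This has to yield the stated constant-in-$k$ delay term rather than something growing with $H$. We would try to bound the unseen-stage model error by its worst case, which is 1 in TV, times the bonus norm, and then invoke Lemma~\ref{lem:L_step_true_model} anchored at stage $H-n$.
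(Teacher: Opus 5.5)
The gap is at the step you flag as the main obstacle, and it is the only new part of the theorem: the delay term. You treat stages $h>H-n$ as having an unlearned model. You then propose to control the optimism deficit there by feeding a worst-case model error ($|g_h|\le 1$) into Lemma~\ref{lem:L_step_true_model}. That cannot produce the stated term. Once $\mathbb E_{\rho_h}[g_h^2]$ is $O(1)$ instead of $O(\zeta_k)$, the factor under the square root becomes $\sqrt{|A|^L k+\lambda_k d+k\epsilon_1}$. Cauchy--Schwarz with the elliptical-potential bound $\sum_k\mathbb E\|\phi\|^2_{\Sigma_k^{-1}}\le d\log(1+K/(d\lambda))$ then gives a per-episode, per-stage cost of order $\sqrt{|A|^L d\log(1+K/(d\lambda))}$. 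This has no $1/(H-n)$, has the wrong dependence on $d$, and grows logarithmically with $K$. The factor $1/(H-n)$ you hope to get ``by normalization over $H-n$ stages'' enters only through $k\zeta_k=\Theta(\log(\cdot)/(H-n))$. That is exactly the $\zeta_k$-decay you said is absent, so the rate $O(\sqrt{|A|^L/(d(H-n))+d})$ per stage is asserted rather than derived.

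The paper does not treat the last $n$ stages as unlearned. Because $P$ is time-invariant, the shared $(\hat\omega_k,\hat\psi_k)$ is used at every stage and Lemma~\ref{lem:MLE} is invoked for all $h\in[H]$. The model-error terms $g_h$, $f_h$ are therefore handled exactly as in the delay-free case. The only effect of the delay is that the bonus is frozen. The missing ingredient is the deterministic perturbation bound of Lemma~\ref{lem:frozen_bonus_error}: from $\Sigma_h\succeq\lambda_k I$, $\|\phi\|\le L_\phi$, $\|\psi\|\le L_\psi$ and the belief-drift bound $\|\bar b_h-\bar b_{\bar h}\|_1\le C_b$, one gets $|\tilde b_{k,h}-\hat b_{k,h}|\le\alpha_k\sqrt{(2L_\phi L_\psi C_b+2L_\phi^2)/\lambda_k}=\epsilon^k_{\mathrm{delay}}$. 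This enters twice:
\begin{itemize}
\item as $-n\epsilon_{\mathrm{delay}}$ in the almost-optimism bound, and
\item as $+n\epsilon_{\mathrm{delay}}$ when $\tilde b$ is replaced by the ideal $\hat b$ before the $L$-step-back and elliptical-potential argument.
\end{itemize}
Under your parameter choice the logarithms cancel, so $\alpha_k/\sqrt{\lambda_k}=\Theta(\sqrt{|A|^L/(d(H-n))+d})$, which is precisely the stated delay term.

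A smaller correction concerns the second summand of the sample complexity. It does not come from the round-2 ($\beta$) distribution. It comes from the slack $-c\alpha_k L/\sqrt{k}$ in almost-optimism. That slack arises because stages whose pull-back lands at $h-L-1\le 0$ have $\hat\phi=e_1$ and $\|\hat\phi\|_{\rho^{-1}}\le 1/\sqrt{k}$. Summing over $k$ gives $L\sqrt{(|A|^L/(H-n)+d^2)K\log(dK(H-n)|\mathcal F|/\delta)}$. Your optimism step claims only an $O(H^2\epsilon_1)$ loss and so omits this term.
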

\begin{proof}
    The detailed proof is in the Appendix.
\end{proof}

\begin{remark}
{
Theorem~\ref{thm:distributed_team_pac} is important and provides a {\it finite-sample performance guarantee} for the decentralized policies generated by Algorithm~\ref{alg:ma-porl-jr}, showing that the locally computed member-side policies achieve near-optimal team performance under the original partially observable model. The bound reflects three sources of error. The finite-memory truncation replaces the full history-dependent belief by an $L$-memory information state, introducing the approximation error $\epsilon_1$, which decreases with $L$ at the cost of increased storage complexity (see Lemma~\ref{lem:L_memory}). The low-rank representation learning contributes the statistical error $\zeta_k$ (see Lemma~\ref{lem:MLE}), which decreases with the number of samples. The third is the error caused by delayed information sharing: since joint information is available only up to $H-n$, the last $n$ stages use a frozen bonus, which yields the delay-dependent term in the final bound. Thus $\epsilon_1$ and $\zeta_k$ govern, respectively, the bias and the statistical accuracy of the learned model, while the delay governs the loss from outdated common information.

The proof combines three ingredients: the finite-memory analysis follows~\cite{guo2023provably}, the statistical analysis of $(\widehat\omega_k,\widehat\psi_k)$ follows~\cite{uehara2021representation}, and the reduction from the $N$ member-side problems to a single centralized analysis follows from the team-theoretic correspondence (Theorem~\ref{theo:ctde_consistency}). The new component is the treatment of delayed common information: \eqref{eq:bouns_error} bounds the deviation $\epsilon_{\mathrm{delay}}$ of the frozen bonus, which the value-difference decomposition propagates into the delay-dependent term in~\eqref{eq:V_error}.
{
We also note that this bound may be conservative, as it is derived from worst-case upper bounds. It should therefore be interpreted primarily as a theoretical characterization of the scaling with the problem parameters, rather than as a tight prediction of the number of episodes required in practice.
}
}
\end{remark}

\section{Numerical Experiments}
We evaluate our algorithm on a multi-agent combination lock environment, a cooperative Dec-POMDP that extends the single-agent combination lock in~\cite{guo2023provably} to $N=3$ agents. The environment is designed to test an algorithm's ability to perform joint exploration and representation learning under partial observability.

The environment maintains a latent state $s \in \{0, 1, 2\}$, where states $0$ and $1$ are "good" states and state $2$ is an absorbing "bad" state, the horizon $H = 5$.
At the beginning of each episode, the latent state is initialized as $s_0 \sim \text{Bernoulli}(p_{\text{switch}})$, taking value $0$ or $1$ with equal probability.  
% Once the environment transitions into state $2$, it remains there for the remainder of the episode.

At each step $h \in \{0, \ldots, H-1\}$, all $N$ agents simultaneously execute local actions $\mathbf{a}_h = (a_h^1, \ldots, a_h^N)$, where each agent has three actions to choose from. 
% $a_h^i \in \mathcal{A} = {0, \ldots, |\mathcal{A}|-1}$. 
The transition is determined by whether the joint action is collectively correct: for each latent state $s \in {0,1}$, there exists a sequence of optimal joint actions $\mathrm{col}\big( \mathbf{a}_0, \dots, \mathbf{a}_{H-1} \big)$ such that $a_h^{i} = \texttt{opt}s[i][h]$ for agent $i$. If and only if all $N$ agents simultaneously select their correct action, the environment remains in a good state (transitioning between states $0$ and $1$ with probability $p_{\text{switch}}$); otherwise, the environment transitions to the bad state $s=2$.  Each agent in state $2$ independently recovers to a uniform random state in ${0,1}$ with probability $p_{\text{recover}}$ per step.

The reward is shared among all agents. 
The team receives a reward of $1$ if all agents are alive (in state ${0,1}$) after the transition, plus a partial credit of $\frac{0.5n_{\text{correct}}}{N}$ where $n_{\text{correct}}$ is the number of agents that selected the correct action at the current step. The correct action for each agent depends on the neighbor's current state, which switches with probability $p_{\text{switch}}$ at each step.

Each agent receives a local observation of the local state.
% rather than directly observing the global state. 
The observation is constructed by combining a representation of the current latent state with a time-dependent signal and additive noise. This combined signal is then transformed  through an agent-specific linear mapping, so that different agents observe distinct but 
information-preserving projections of the same underlying state.
%
% Furthermore, at even-numbered steps ($h \bmod 2 = 0$), observations
% corresponding to states $0$ and $2$ are masked to zero, preventing agents from distinguishing the good state from the absorbing bad state at those steps.
%
Since the latent state is not directly observable, agents rely on an observation history. 
% Following prior work, we represent the history at
% step $h$ as a concatenation of the previous and current joint observations:
% $$z_h = [o_{h-1}^1 | \cdots | o_{h-1}^N | o_h^1 | \cdots | o_h^N] \in \mathbb{R}^{2 N d_{\text{obs}}}$$
%   This two-step window is shared across all agents during centralized trai ning.

%%%%%%%%%%%%%%%%%%%%%%%%%%%%%%%%%%%%%%%%%%%%%%%%%%%%
% The environment presents three compounding challenges: 
% \begin{itemize}
% \item The optimal action at each step depends on the latent state, which is hidden and must be inferred from a noisy, rotated observation;
% \item Partial observability is heterogeneous, each agent's view is a distinct linear transformation of the same latent signal, so no single agent can recover the full state alone;
% \item The cooperative constraint requires all agents to act correctly simultaneously, meaning a single agent's mistake collapses the entire episode, making joint exploration substantially harder than in the single-agent case.
% \end{itemize}

We first compare the performance of the proposed decentralized method under different communication delays. The results are shown in Fig.~\ref{fig:delay_return}. As the delay increases, the average return decreases, since each agent makes decisions based on less recent common information. This illustrates the impact of delayed information on decentralized coordination.
We also evaluate the consistency between the decentralized solution and the centralized solution. Specifically, the centralized solution is computed using the full joint information, while the decentralized solution is obtained using $2$-step delayed common information and each agent's private local observation. 

We also evaluate the consistency of the members' local optimal policy and the manager's global optimal policy.
 Fig.~\ref{fig:centralized_decentralized_consistency} reports the agreement rate at each decision step, averaged across the three agents and over the last 10 training iterations. 
 It can be seen that the decentralized solution agrees with the centralized solution in most decision steps, indicating that the proposed decentralized representation can approximate the centralized team decision reasonably well. 
As discussed in Remark~\ref{remark:inconsistency}, the agreement is not exact because, rather than summing over all possible next observations as in the theoretical formulation, our implementation approximates this sum via a Monte Carlo average over sampled trajectories. The residual disagreement observed in Fig.~\ref{fig:centralized_decentralized_consistency} is therefore consistent with this empirical approximation, and is expected to vanish as the sample coverage of the observation space improves.

% {\color{red}
% Add a remark or discussion on the inconsistency, and recall that in the simulation. 
% Make a formula analysis, and ask for some feedback from other reviewers.
% }

\begin{figure}
    \centering
    \includegraphics[scale=0.4]{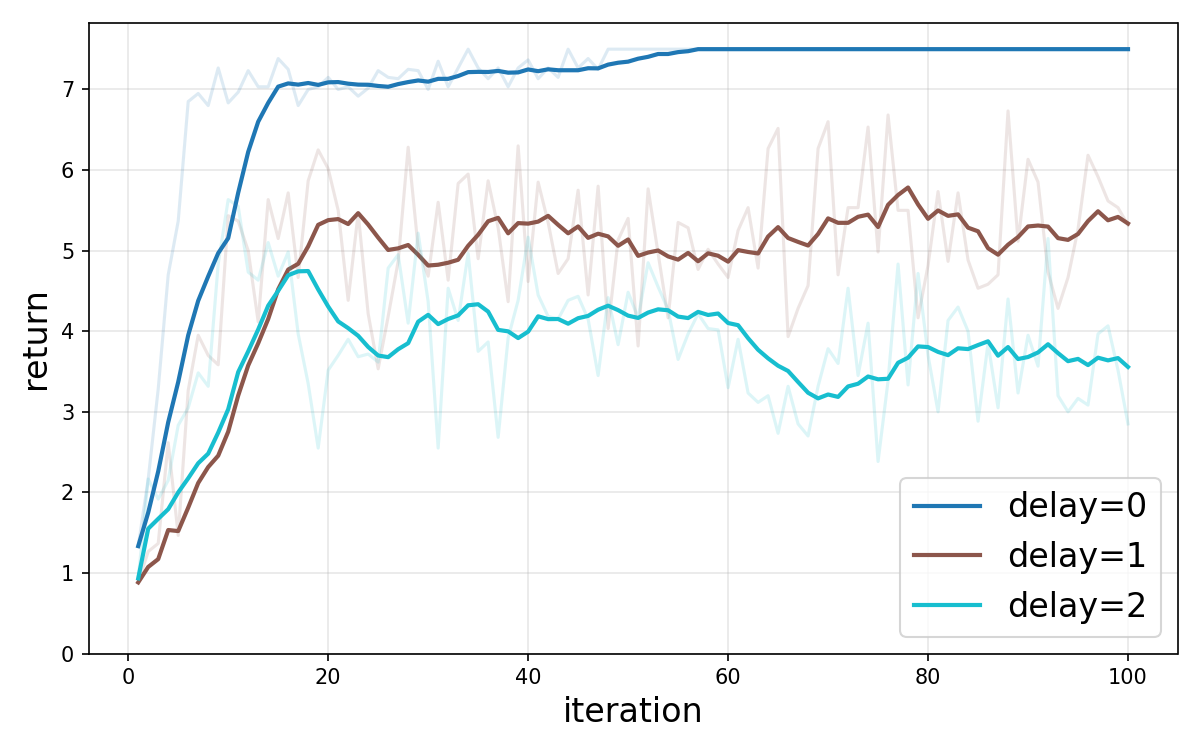}
    \caption{Return under different delays}
    \label{fig:delay_return}
\end{figure}
\begin{figure}
    \centering
    \includegraphics[scale=0.5]{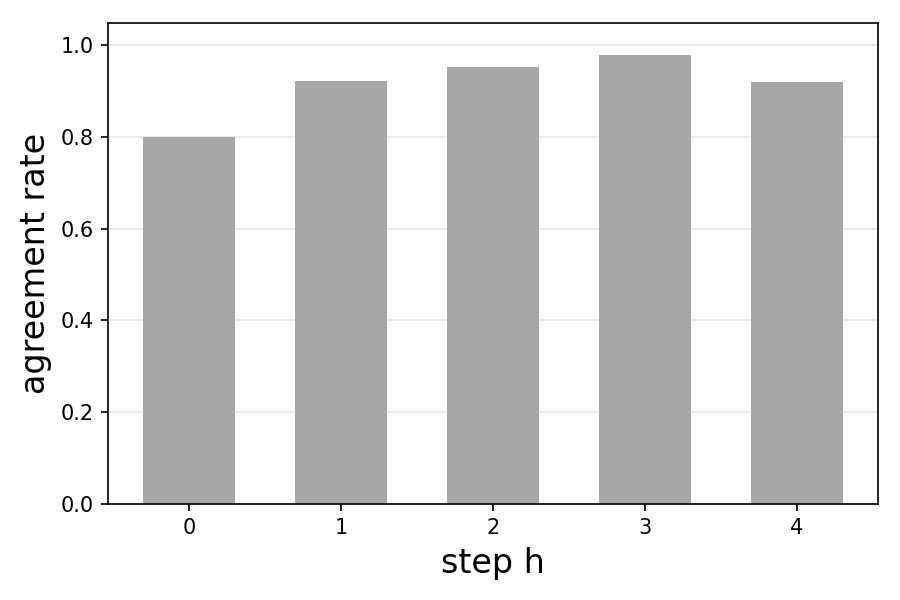}
    \caption{Centralized-decentralized consistency}
    \label{fig:centralized_decentralized_consistency}
\end{figure}

\section{Concluding Remarks}

In this paper, we studied decentralized partially observable team decision problems with delayed information sharing and unknown dynamics. We proposed a fully decentralized learning and planning framework that combines team-theoretic equivalence with low-rank representation learning. In the proposed method, each team member uses delayed common information and local private information to construct an approximate low-rank Markov decision process and compute its policy through value iteration, without relying on a centralized coordinator.
We showed that the member-side solution corresponds to the associated component of an approximate team-optimal solution under the learned model. We also established a finite-sample performance guarantee that captures the effects of representation learning and delayed information sharing. 
% The multi-agent combination lock example illustrates the potential of the proposed method for decentralized learning in cooperative partially observable settings.
%
Future work includes relaxing the structural assumptions and extending the framework to more general information-sharing patterns and function approximation architectures.

\section{Appendix} \label{appen:proof}

\subsection{Proof of Theorem~\ref{thm:distributed_team_pac}}
First, we provide the following maximum likelihood estimation (MLE) guarantee, which upper-bounds the error using the learned features at any iteration.
Let $\hat P$ denote the learned POMDP model by solving~\eqref{eq:pomdp} and $\hat{\mathcal{M}}$ denote the learned MDP model by solving~\eqref{eq:MDP}.
Since the transition matrix ${P}$ is invariant with time $h$, according to \cite[Lemma 18]{uehara2021representation}, we have the following result. 

\begin{lemma} \label{lem:MLE}
For any $h\in[H]$, let $\rho_h$ denote the joint distribution of
$(o_{3-2L:h},a_{3-2L:h},o_{h+1})$ induced by the dataset $\mathcal D$ of size $k$. Then, with probability at least $1-\delta$, we have
\begin{align}
&\mathbb E_{o_{3-2L:h},\,a_{3-2L:h}\sim \rho_h}
\![ \| \hat P \left(\cdot \mid o_{3-2L:h},a_{3-2L:h}\right)^\top \nonumber
\\& - P\left(\cdot \mid o_{3-2L:h},a_{3-2L:h}\right)^\top \|_1^2 ] \nonumber
\\& \le \zeta_k = O\!\left(\frac{\log(k(H-n)|\mathcal F|/\delta)}{k(H-n)}\right).
\end{align}

In addition, we have
\begin{align}
&\mathbb E_{z_h,a_h\sim \rho_h}
\![\| P^{ \hat{\mathcal M}}\!\left(\cdot \mid z_h,a_h\right)^\top
- P^{\mathcal M}\!\left(\cdot \mid z_h,a_h\right)^\top
\|_1^2 ] \nonumber
\\& \le O\!\left(\frac{\log(k(H-n)|\mathcal F|/\delta)}{k(H-n)}+\epsilon_1\right).
\end{align}
\end{lemma}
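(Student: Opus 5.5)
The plan is to derive Lemma~\ref{lem:MLE} from the standard finite-class MLE generalization bound, as in \cite[Lemma 18]{uehara2021representation}, and then use Lemma~\ref{lem:L_memory} to pass from the POMDP predictive distributions to their $L$-memory MDP surrogates.

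\textbf{Setting up the data.} The estimator~\eqref{eq:representation_learning} maximizes the log-likelihood $\log \xi_h(\eta_h,a_h)^\top\mu(o_{h+1})$ over the finite class $\mathcal F$. The transition $P$ and the parametrization $(\omega,\psi)$ do not depend on $h$. Hence the stage-wise buffers $\mathcal D_1,\dots,\mathcal D_{H-n}$ (and $\mathcal D'$) can be pooled into one dataset of $k(H-n)$ samples, all drawn through the same conditional model. The samples are adaptively collected: at round $k$ they come from $\pi_{k-1}$ composed with uniform exploration. I will therefore treat them as a martingale sequence whose conditional distribution given the past is $P(\cdot\mid\eta_h,a_h)$ in~\eqref{eq:pomdp}.

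\textbf{First bound.} I will apply the standard tangent-sequence / Chernoff argument for MLE on a finite class with adaptive data, as in \cite[Lemma 18]{uehara2021representation}. This bounds the squared Hellinger distance, averaged over the pooled data distribution, between the fitted model $\hat P$ and the true model by $O(\log(|\mathcal F|/\delta)/(k(H-n)))$. A union bound over iterations contributes the extra $\log k$ factor. Realizability (Assumption~\ref{ass:model_class}) guarantees that the true model lies in $\mathcal F$, so its log-likelihood is no larger than that of the maximizer. Converting Hellinger to squared $\ell_1$ (TV$^2\le 2H^2$) gives the first inequality, where $\rho_h$ is the mixture over $h$ and over rounds, i.e.\ the averaged-policy distribution.

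\textbf{Second bound.} For the finite-memory model, $P^{\hat{\mathcal M}}$ and $P^{\mathcal M}$ are obtained from $\hat P$ and $P$ by the same map $q$ (replacing $b_h$ by $\bar b_h$). I will use the triangle inequality
\[
\|P^{\hat{\mathcal M}}-P^{\mathcal M}\|_1\le \|P^{\hat{\mathcal M}}-\hat P\|_1+\|\hat P-P\|_1+\|P-P^{\mathcal M}\|_1 .
\]
The middle term is controlled by the first bound, after noting that $z_h$ is a function of the full history. The last term is controlled by Lemma~\ref{lem:L_memory}: it is at most $\epsilon_1$ in expectation, and since each $\ell_1$ term is at most $2$, its square is at most $2\epsilon_1$ in expectation. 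The first term also needs Lemma~\ref{lem:L_memory}, applied to the learned model $\hat P$. This is legitimate because $\gamma$-observability is built into the class $\mathcal F$, so every member of the class admits the same $L$-memory approximation. Squaring with $(a+b+c)^2\le 3(a^2+b^2+c^2)$ then yields $O(\zeta_k+\epsilon_1)$.

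\textbf{Main obstacle.} The delicate step is the martingale MLE bound with adaptive, multi-stage pooled data, since samples from different $h$ are correlated within an episode. I would handle this by applying the bound separately for each $h$ with confidence $\delta/(H-n)$ and then averaging; this is the source of the $\log(k(H-n)|\mathcal F|/\delta)$ term. A second concern is ensuring that the finite-memory approximation holds uniformly for $\hat P$, which requires the observability constraint to hold for every element of $\mathcal F$.
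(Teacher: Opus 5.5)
Your argument for the first inequality is the paper's argument. The paper's entire justification is that $P$ is time-invariant, so the stage buffers can be pooled and \cite[Lemma 18]{uehara2021representation} applied. The gap is in what pooling actually delivers. The MLE bound on $k(H-n)$ pooled samples controls the error only under the stage-averaged law $\frac{1}{H-n}\sum_{h\le H-n}\rho_h$, which is why you end up calling $\rho_h$ ``the mixture over $h$''. The statement, however, is for each fixed $h$, with $\rho_h$ the stage-$h$ law. From the averaged bound, a single stage only inherits $(H-n)$ times the average, i.e.\ $O(\log(k(H-n)|\mathcal F|/\delta)/k)$.

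Your repair in the last paragraph does not close this. Running the argument stage by stage with confidence $\delta/(H-n)$ puts $H-n$ inside the logarithm, but each stage still has only $O(k)$ samples, so the denominator stays $k$, not $k(H-n)$. Moreover, a per-stage MLE argument needs the stage-$h$ log-likelihood ratio of $\hat P$ against $P$ to be nonpositive, which the pooled maximizer does not guarantee; the per-stage bound must instead be read off the pooled one by dropping the other nonnegative terms. The within-episode correlation you single out is not the real difficulty: the tangent-sequence argument already handles adaptively collected data, and Algorithm~\ref{alg:ma-porl-jr} uses separate rollouts for each $h$. Also, for $h>H-n$ the common buffer contains no stage-$h$ data at all. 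So this route gives the first claim either for the stage-averaged distribution, or per stage with $\zeta_k$ replaced by $O(\log(k(H-n)|\mathcal F|/\delta)/k)$. The paper's one-line citation has exactly the same gap.

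For the second bound, your middle and last terms are fine. The last uses Lemma~\ref{lem:L_memory} for the true POMDP under the data-generating policy, plus $\|\cdot\|_1\le 2$. The first term is where the argument fails. Lemma~\ref{lem:L_memory} applied to $\hat{\mathcal P}$ controls $\mathbb E\|P^{\hat{\mathcal M}}(\cdot\mid z_h,a_h)-\hat P(\cdot\mid \eta_h,a_h)\|_1$ only when trajectories are generated by $\hat{\mathcal P}$ itself. The underlying belief-contraction argument needs the observations fed to the filter to be drawn from the model that runs the filter. Here $\rho_h$ is generated by the environment $\mathcal P$. $\gamma$-observability of every element of $\mathcal F$ (which holds since $O$ is shared) does not remove this mismatch.

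The natural repair is a change of measure over the $L$-step window before $h$. Conditional on the prefix, the window laws under $\mathcal P$ and $\hat{\mathcal P}$ differ in total variation by at most a sum over $t=h-L,\dots,h-1$ of unsquared errors $\|\hat P(\cdot\mid\eta_t,a_t)-P(\cdot\mid\eta_t,a_t)\|_1$ under the window marginals of $\rho_h$. Moving these to the data laws $\rho_t$ costs importance weights up to $|\mathcal A|^{L}$. The result is a term of order $L\sqrt{|\mathcal A|^{L}\zeta_k}$ rather than $O(\zeta_k)$, so your decomposition does not yield the claimed $O(\zeta_k+\epsilon_1)$. The paper gives no argument for this second bound that would fill the gap.
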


Then, we provide the bonus error because of the delayed information structure.
\begin{lemma}
\label{lem:frozen_bonus_error}
For a fixed episode $k$, each stage $h\in[H]$, denote the true bonus by
\[ \hat{b}_{k,h}(z,a)= \min\left\{ \alpha_k \sqrt{ \bigl(\phi_h(z,a)\bigr)^\top \Sigma_h^{-1}\phi_h(z,a) }, 1 \right\},
\]
where $ \Sigma_h = \sum_{(z',a')\sim \mathcal B_h \ } \phi_{k,h}(z',a')\bigl(\phi_{k,h}(z',a')\bigr)^\top +\lambda_k I $, $\mathcal B_h = \mathcal{D}_h \cup \mathcal{D}'_h$.
% \[ \Sigma_h = \sum_{(z',a')\sim \mathcal D_h \ } \phi_{k,h}(z',a')\bigl(\phi_{k,h}(z',a')\bigr)^\top +\lambda_k I. \]
Fix $\bar h= H-n$. For the last $n$ stages $h=\bar h+1,\dots,H$, define the frozen bonus
\[ \tilde b_{k,h}(z,a)=\hat{b}_{k, \bar h}(z,a). \]
Assume that, for all $h,z,a,s$,
\begin{align}
\|\phi_{k,h}(z,a)\|\le L_\phi,
\quad
\|\psi_k(s,a)\|\le L_\psi.
\end{align}
Assume further that there exists a constant $C_b>0$ such that, for all
$h=\bar h+1,\dots,H$,
\begin{align}
\|\bar b_h^{\mathcal{P}}(z)-\bar b_{\bar h}^{\mathcal{P}}(z)\|_1\le C_b.
\end{align}
Then, for every $h\in\{\bar h+1,\dots,H\}$,
\begin{align} \label{eq:bouns_error}
&\bigl|\tilde b_{k,h}(z,a)-\hat{b}_{k,h}(z,a)\bigr|
\nonumber 
\\&\le
\alpha_k
\sqrt{
\frac{2L_\phi L_\psi C_b+2L_\phi^2}{\lambda_k}
} \nonumber \\& 
= \epsilon^k_{\mathrm{delay}} .
\end{align}
\end{lemma}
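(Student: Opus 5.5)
The plan is to compare the two bonuses through their arguments under the square root, and to control that argument difference with the crude eigenvalue bound $\Sigma^{-1}\preceq \lambda_k^{-1}I$. This bound holds for any regularized Gram matrix of the form $\sum\phi\phi^\top+\lambda_k I$.

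\emph{Step 1 (reduce to the square-root arguments).} Write $x=\alpha_k^2\,\phi_{k,\bar h}(z,a)^\top\Sigma_{\bar h}^{-1}\phi_{k,\bar h}(z,a)$ and $y=\alpha_k^2\,\phi_{k,h}(z,a)^\top\Sigma_h^{-1}\phi_{k,h}(z,a)$. Both are nonnegative. The map $t\mapsto\min\{\sqrt t,1\}$ is $1$-Lipschitz in the sense that $|\min\{\sqrt x,1\}-\min\{\sqrt y,1\}|\le|\sqrt x-\sqrt y|\le\sqrt{|x-y|}$, using $|\sqrt x-\sqrt y|^2\le|x-y|$. It therefore suffices to show
\[
\bigl|\phi_{\bar h}^\top\Sigma_{\bar h}^{-1}\phi_{\bar h}-\phi_h^\top\Sigma_h^{-1}\phi_h\bigr|\le \frac{2L_\phi L_\psi C_b+2L_\phi^2}{\lambda_k}.
\]
Here $\phi_h$ abbreviates $\phi_{k,h}(z,a)$. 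Multiplying this by $\alpha_k^2$ and taking square roots then gives \eqref{eq:bouns_error}.

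\emph{Step 2 (feature perturbation from the belief drift).} By \eqref{eq:phi}, $\phi_h-\phi_{\bar h}=\int\psi_k(s,a)\bigl(\bar b_h(z)(s)-\bar b_{\bar h}(z)(s)\bigr)ds$. Hence $\|\phi_h-\phi_{\bar h}\|\le L_\psi\|\bar b_h-\bar b_{\bar h}\|_1\le L_\psi C_b$.

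\emph{Step 3 (split the quadratic-form difference).} Add and subtract to write the difference as
\[
\bigl(\phi_{\bar h}^\top\Sigma_{\bar h}^{-1}\phi_{\bar h}-\phi_{h}^\top\Sigma_{\bar h}^{-1}\phi_{h}\bigr)+\bigl(\phi_h^\top\Sigma_{\bar h}^{-1}\phi_h-\phi_h^\top\Sigma_h^{-1}\phi_h\bigr).
\]
For the first bracket, use $u^\top Mu-v^\top Mv=(u-v)^\top M(u+v)$ with $\|M\|\le\lambda_k^{-1}$. Only one factor of the difference $u-v$ is paid; the other factor $u+v$ is bounded crudely. This gives at most $\lambda_k^{-1}\|\phi_{\bar h}-\phi_h\|\,\|\phi_{\bar h}+\phi_h\|\le 2L_\phi L_\psi C_b/\lambda_k$. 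For the second bracket, do not exploit any closeness of $\Sigma_h$ and $\Sigma_{\bar h}$. Both terms lie in $[0,L_\phi^2/\lambda_k]$ because $0\prec\Sigma^{-1}\preceq\lambda_k^{-1}I$, so their difference is at most $L_\phi^2/\lambda_k\le 2L_\phi^2/\lambda_k$. Summing the two brackets gives the target.

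The main obstacle is that $\Sigma_h$ for $h>\bar h$ is built from data the members do not have, so its relation to $\Sigma_{\bar h}$ is unknown. This is why I would bound that piece only through the uniform spectral bound $\lambda_k^{-1}$, giving the crude $2L_\phi^2$ term. A sharper bound via $\Sigma_{\bar h}^{-1}-\Sigma_h^{-1}=\Sigma_{\bar h}^{-1}(\Sigma_h-\Sigma_{\bar h})\Sigma_h^{-1}$ would need control of the buffer drift, which the hypotheses do not provide.
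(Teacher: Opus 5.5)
Your proposal is correct and follows the paper's proof essentially step for step: the same reduction to $\alpha_k\sqrt{|q_h-q_{\bar h}|}$, the same feature-drift bound $\|\phi_h-\phi_{\bar h}\|\le L_\psi C_b$, and the same add-and-subtract split, with the matrix term controlled only through $\Sigma^{-1}\preceq\lambda_k^{-1}I$. The differences are minor: you pivot on $\Sigma_{\bar h}^{-1}$ where the paper uses $\Sigma_h^{-1}$, and your bound of $L_\phi^2/\lambda_k$ on the matrix term (both quadratic forms lie in $[0,L_\phi^2/\lambda_k]$) is a factor of two tighter than the paper's $L_\phi^2(\|\Sigma_h^{-1}\|+\|\Sigma_{\bar h}^{-1}\|)$, which you then relax to match the stated constant.
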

\begin{proof}
Define
\[ q_h(z,a) =  \bigl(\phi_{k,h}(z,a)\bigr)^\top \Sigma_h^{-1}\phi_{k,h}(z,a).
\]
Then
\begin{align}
&\hat b_{k,h}(z,a)=\min\{\alpha_k\sqrt{q_h(z,a)},\,1\},
\nonumber \\&
\tilde b_{k,h}(z,a)=\min\{\alpha_k\sqrt{q_{\bar h}(z,a)},\,1\}.
\end{align}
Hence
\begin{align}
&\bigl|\tilde b_{k,h}(z,a)- \hat b_{k,h}(z,a)\bigr|
\nonumber \\&
\le \alpha_k \bigl|
\sqrt{q_{\bar h}(z,a)}-\sqrt{q_h(z,a)}
\bigr| 
\nonumber \\& 
\le \alpha_k \sqrt{|q_h(z,a)-q_{\bar h}(z,a)|},
\end{align}
where we used $|\sqrt{x}-\sqrt{y}|\le \sqrt{|x-y|}$ for all $x,y\ge 0$.

For simplicity, write
\[ \phi_h=\phi_{k,h}(z,a), \phi_{\bar h}=\phi_{k,\bar h}(z,a), A_h=\Sigma_h^{-1}, A_{\bar h}=\Sigma_{\bar h}^{-1}.
\]
Then \[ q_h-q_{\bar h} = \phi_h^\top A_h\phi_h-\phi_{\bar h}^\top A_{\bar h}\phi_{\bar h}.
\]
Adding and subtracting $\phi_{\bar h}^\top A_h\phi_{\bar h}$ yields
\begin{align}
q_h-q_{\bar h} = \underbrace{ \phi_h^\top A_h\phi_h-\phi_{\bar h}^\top A_h\phi_{\bar h} }_{T_1} + \underbrace{ \phi_{\bar h}^\top(A_h-A_{\bar h})\phi_{\bar h} }_{T_2}.
\end{align}
Thus,
\[
|q_h-q_{\bar h}|\le |T_1|+|T_2|.
\]

For the first term, we have
\[ |T_1| \le \|A_h\|\,\|\phi_h-\phi_{\bar h}\|(\|\phi_h\|+\|\phi_{\bar h}\|).
\]
Since $\Sigma_h\succeq \lambda_k I$, we have $\forall h$,
\[ \|A_h\|=\|\Sigma_h^{-1}\|\le \frac{1}{\lambda_k}, \]
and therefore
\[ |T_1| \le \frac{2L_\phi}{\lambda_k}\|\phi_h-\phi_{\bar h}\|.
\]
Moreover, by the definition of $\phi_h(z,a)$ and the bound
$\|\psi(s,a)\|\le L_\psi$,
\[
\begin{aligned}
\|\phi_h-\phi_{\bar h}\|
&=
\left\|
\int \psi(s,a)\bigl(\bar b_h^\mathcal{P}(z)(s)-\bar b_{\bar h}^\mathcal{P}(z)(s)\bigr)\,ds
\right\| \\
&\le
\int \|\psi(s,a)\|
\bigl|
\bar b_h^\mathcal{P}(z)(s)-\bar b_{\bar h}^\mathcal{P}(z)(s)
\bigr|\,ds \\
&\le L_\psi \|\bar b_h^\mathcal{P}(z)-\bar b_{\bar h}^\mathcal{P}(z)\|_1 \\
&\le L_\psi C_b.
\end{aligned}
\]
Hence,
\begin{align}
|T_1| \le \frac{2L_\phi L_\psi C_b}{\lambda_k}.
\end{align}
For the second term,
\begin{align} 
|T_2| &\le \|\phi_{\bar h}\|^2\,\|A_h-A_{\bar h}\|
\nonumber \\& 
\le L_\phi^2(\|A_h\|+\|A_{\bar h}\|) \leq \frac{2L_\phi^2}{\lambda_k}.
\end{align}

Combining the bounds for $T_1$ and $T_2$ gives
\begin{align}
|q_h-q_{\bar h}|
\le \frac{2L_\phi L_\psi C_b}{\lambda_k}
+\frac{2L_\phi^2}{\lambda_k}
% +\frac{L_\phi^2 C_D}{\lambda_k^2}.
\end{align}
Substituting this into the previous inequality yields
\begin{align}
&\bigl|\tilde b_{k,h}(z,a) - \hat{b}_{k,h}(z,a)\bigr|
\nonumber \\& \le \alpha_k
\sqrt{ \frac{2L_\phi L_\psi C_b}{\lambda_k} + \frac{2L_\phi^2}{\lambda_k} }.
\end{align}
This completes the proof.
\end{proof}

Next, we move to the proof of the main result.
\paragraph*{Proof of Theorem~\ref{thm:distributed_team_pac}}
According to Theorem~\ref{theo:ctde_consistency}, the decentralized policies correspond to the policy obtained by the centralized method. Therefore, we concentrate the centralized solution in the following analysis.

% [NEW]: Define the ideal bonus vs the actual delayed bonus
For a fixed $k$, let $\widehat{b}_h(z,a)$ denote the ideal exploration bonus computed with the true current features, while $\tilde{b}_h(z,a)$ is the actual bonus used in our algorithm due to the $n$-step delayed information. According to Lemma~\ref{lem:frozen_bonus_error}, the maximum deviation caused by the delayed information structure is bounded:
\begin{equation} \label{eq:delay_bonus_bound}
\max_{h, z, a} |\tilde{b}_h(z,a) - \widehat{b}_h(z,a)| \le \epsilon_{\mathrm{delay}},
\end{equation}
where $\epsilon_{\mathrm{delay}}$ is defined in \eqref{eq:bouns_error}.
Note that for the first $H-n$ steps, $\tilde{b}_h = \widehat{b}_h$. The deviation only accumulates over the last $n$ steps.
    
We first study the value function error under the approximate MDP. By Lemma~\ref{lem:simulation_lemma}, for a fixed $k$,
\begin{align} \label{eq:m_error}
 % [MODIFIED]: Replaced \widehat{b} with \tilde{b} and extracted the delay penalty
 & V^{\pi^*, \widehat{\mathcal{M}}, r+\tilde{b}} - V^{\pi^*, \mathcal{M}, r} \nonumber  
 \\& = \sum_{h=1}^{H} \mathbb{E}_{(z_h, a_h) \sim d_{\widehat{\mathcal{M}}}^{\pi^*}} [ \tilde{b}_h(z_h, a_h)
 \nonumber
 \\&+ \mathbb{E}_{o' \sim {\widehat{\mathcal{M}}}(\cdot|z_h, a_h)} [V_{h+1}^{\pi^*, \mathcal{M}, r}(z_{h+1}')]  \nonumber
 \\&- \mathbb{E}_{o' \sim {\mathcal{M}}(\cdot|z_h, a_h)} [V_{h+1}^{\pi^*, \mathcal{M}, r}(z_{h+1}')] ] \nonumber \\
 & = \sum_{h=1}^{H} \mathbb{E}_{(z_h, a_h) \sim d_{\widehat{\mathcal{M}}}^{\pi^*}} \bigl[ \widehat{b}_h(z_h, a_h) + (\tilde{b}_h(z_h, a_h) - \widehat{b}_h(z_h, a_h)) \nonumber
 \\&+ \mathbb{E}_{o' \sim {\widehat{\mathcal{M}}}(\cdot|z_h, a_h)} [V_{h+1}^{\pi^*, \mathcal{M}, r}(z_{h+1}')] \nonumber \\
 &  - \mathbb{E}_{o' \sim {\mathcal{M}}(\cdot|z_h, a_h)} [V_{h+1}^{\pi^*, \mathcal{M}, r}(z_{h+1}')] \bigr] \nonumber \\
 & \ge \sum_{h=1}^{H} \mathbb{E}_{(z_h, a_h) \sim d_{\widehat{ \mathcal{P}} }^{\pi^*}} \bigl[ \min(c \alpha_k \| \widehat{\phi}_h(z, a) \|_{\Sigma_{\rho_h, \widehat{\phi}_h }^{-1}}, 1) \nonumber
 \\& + \mathbb{E}_{o' \sim {\widehat{\mathcal{P}}}(\cdot|z_h, a_h)} [V_{h+1}^{\pi^*, \mathcal{M}, r}(z_{h+1}')]  \nonumber 
 \\ &  - \mathbb{E}_{o' \sim {\mathcal{P}}(\cdot|z_h, a_h)} [V_{h+1}^{\pi^*, \mathcal{M}, r}(z_{h+1}')] \bigr] - \mathcal{O}(H^2 \epsilon_1) - n \epsilon_{\mathrm{delay}},
\end{align}
where in the first inequality, we use the property of concentration of the bonus \cite[Lemma 16]{guo2023provably}, here $c$ is an absolute constant, and the lower bound of the bonus deviation follows from \eqref{eq:delay_bonus_bound}. The second inequality is by \eqref{eq:P_M_deviation}. We define
\begin{align}
g_h(z, a) & = \mathbb{E}_{o'_h \sim {\widehat{\mathcal{P}}}(\cdot|z, a)} [V_{h+1}^{\pi^*, \mathcal{M}, r}(c(z, a, o'_h))] 
\nonumber \\& - \mathbb{E}_{o'_h \sim {\mathcal{P}}(\cdot|z, a)} [V_{h+1}^{\pi^*, \mathcal{M}, r}(c(z, a, o'_h))].
\end{align}

With Lemma~\ref{lem:MLE}, for any $(z, a)$ we have
\begin{equation}
\mathbb{E}_{(z, a) \sim \rho_h} [g_h^2(z, a)] \le \zeta_k, \quad \mathbb{E}_{(z, a) \sim \beta_h} [g_h^2(z, a)] \le \zeta_k.
\end{equation}    

By Lemma~\ref{lem:L_step_learned_model}, we have
\begin{align} \label{eq:g_h_eror}
& \sum_{h=1}^{H}\mathbb{E}_{(z,a)\sim d^{\pi}_{\hat{\mathcal{P}},h}}
\!\left[g_h(z,a)\right] \nonumber
\\&\le \sum_{h=1}^{H}
\min\Bigl\{ 1, \mathbb{E}^{\hat{\mathcal{P}}}_{ z_{h-L-1},\,a_{h-L-1}\sim \pi}
\nonumber
\\& \bigl\|
\hat\phi^{\top}(z_{h-L-1},a_{h-L-1})
\bigr\|_{\rho_{h-L-1}^{-1},\,\hat\phi_{h-L-1}}
\Bigr\}
\nonumber\\& \cdot
\sqrt{ |A|^{L}k\zeta_k + 4\lambda_k d +4 k\epsilon_1 } + \mathcal{O}(H^2\epsilon_1)
\nonumber\\ &\le
\sum_{h=1}^{H}
\min\Bigl\{ 1, c\alpha_k
\mathbb{E}^{\hat{\mathcal M}}_{ z_{h-L-1},\,a_{h-L-1}\sim \pi}
\nonumber
\\& \bigl\| \hat\phi^{\top}(z_{h-L-1},a_{h-L-1})
\bigr\|_{\rho_{h-L-1}^{-1},\,\hat\phi_{h-L-1}}
\Bigr\}+ \mathcal{O}(H^2\epsilon_1),
\end{align}
where in the last step we use \eqref{eq:P_M_deviation} and the definition
\[\alpha_k=\sqrt{k|A|^{L}\zeta_k+4\lambda_kd+4 k\epsilon_1}/c. \]

For $h \le 0$, we have
\begin{equation} \label{eq:phi_1}
\|\hat \phi^{\top}(z_h,a_h) \|_{\rho_h^{-1},\hat\phi_h}
= \sqrt{\frac{1}{k+\lambda}}
< \frac{1}{\sqrt{k}},
\end{equation}
since $\phi(s,a)=e_1$ for $h \le 0$.

% [MODIFIED]: Added the delay penalty to the almost optimism bound
Combine \eqref{eq:m_error}, \eqref{eq:g_h_eror} and \eqref{eq:phi_1}, with probability $1-\delta$,
\begin{equation} \label{eq:V_M}
V^{\pi^\ast,\hat{\mathcal{M}},r+\tilde{b}_k}- V^{\pi^\ast,\mathcal{M},r}
\ge - \frac{c\alpha_k L}{\sqrt{k}} - \mathcal{O}(H^2\epsilon_1) - n \epsilon_{\mathrm{delay}}.
\end{equation}
    
Next, we start the analysis of the sample complexity as follows.
For a fixed $k$,
\begin{align}
&V^{\pi^\ast,\mathcal{M},r} - V^{\pi_k,\mathcal{M},r}
\nonumber
\\& \le V^{\pi^\ast,\mathcal{\hat{M}},r+\tilde{b}_{k}} - V^{\pi_k,\mathcal{M},r}
+ \frac{c\alpha_k L}{\sqrt{k}} + \mathcal{O}(H^2\epsilon_1) + n \epsilon_{\mathrm{delay}}
\nonumber \\
& \le V^{\pi_k,\hat{\mathcal M},r+\tilde{b}_{k}} - V^{\pi_k,\mathcal{M},r}
+ \frac{c\alpha_k L}{\sqrt{k}} + \mathcal{O}(H^2\epsilon_1) + n \epsilon_{\mathrm{delay}}
\nonumber \\&= \sum_{h=1}^{H}
\Biggl[ \mathbb E_{(z_h,a_h)\sim d_h^{\pi_k,\mathcal M}}
\!
\nonumber \\&
\left[ \tilde{b}_{h}(z_h,a_h) + \mathbb E_{o'_{h}\sim \hat {\mathcal M}(\cdot \mid z_h,a_h)} \![ V_{h+1}^{\pi_k,\hat{\mathcal M},r+\tilde{b}_{k}}(z'_{h+1}) ]
\right.
\nonumber \\&\left.
-
\mathbb E_{o'_{h}\sim {\mathcal M}(\cdot \mid z_h,a_h)}
[ V_{h+1}^{\pi_k,\hat{\mathcal M},r+\tilde{b}_{k}}(z'_{h+1}) ]
\right]
\Biggr]
\nonumber \\& + \frac{c\alpha_k L}{\sqrt{k}} + O(H^2\epsilon_1) + n \epsilon_{\mathrm{delay}},
\end{align}
where the first inequality comes from \eqref{eq:V_M}, the second inequality comes from the fact that with Algorithm~\ref{alg:ma-lsvi-jr},
\begin{equation}
\pi_k = \arg\max_{\pi} V^{\pi,\hat{\mathcal M},r+\tilde{b}_k},
\end{equation}
and the last equation comes from Lemma~\ref{lem:simulation_lemma}.

By \eqref{eq:P_M_deviation}, we further have
\begin{align}
&\sum_{h=1}^{H}
\Biggl[
\mathbb E_{(z_h,a_h)\sim d_h^{\pi_k,\mathcal M}}
\bigl[\tilde b_{h}(z_h,a_h) \nonumber
\\& + \mathbb E_{o'_{h}\sim \hat{\mathcal{M}}(\cdot \mid z_h,a_h)}
\![V_{h+1}^{\pi_k,  \hat{\mathcal{M}}, r+\tilde{b}_{k}}(z'_{h+1}) ]
\nonumber\\&
-\mathbb E_{o'_{h}\sim {\mathcal M}(\cdot \mid z_h,a_h)}
[ V_{h+1}^{\pi_k,\hat{\mathcal M},r+\tilde{b}_{k}}(z'_{h+1}) ]
 \bigr]
\Biggr]
+ \frac{c\alpha_k L}{\sqrt{k}} + n \epsilon_{\mathrm{delay}}
\nonumber \\& \le
\sum_{h=1}^{H}
\Biggl[
\mathbb E_{(z_h,a_h)\sim d_h^{\pi_k,\mathcal P}}
\bigl[\tilde b_{h}(z_h,a_h)\nonumber
\\& + \mathbb E_{o'_{h}\sim \hat {\mathcal{P}}(\cdot \mid z_h,a_h)}
[ V_{h+1}^{\pi_k,\hat{\mathcal M},r+\tilde{b}_{k}}(z'_{h+1}) ]
\nonumber \\& -\mathbb E_{o'_{h}\sim \mathcal P(\cdot \mid z_h,a_h)}
[ V_{h+1}^{\pi_k,\hat{\mathcal M},r+\tilde{b}_{k}}(z'_{h+1}) ] \bigr]
\Biggr] \nonumber \\
& + \frac{c\alpha_k L}{\sqrt{k}} + O(H^2\epsilon_1) + n \epsilon_{\mathrm{delay}}.
\end{align}

Denote
\begin{align}
&f_h(z_h,a_h)
\nonumber 
\\&= \frac{1}{2H+1}
\left(
\mathbb E_{o'_h\sim \hat {\mathcal{P}}(\cdot \mid z_h,a_h)}
\!\left[
V_{h+1}^{\pi_k,\hat{\mathcal M},r+\tilde{b}_k}(z'_{h+1})
\right] \nonumber
\right. \\& \left. -
\mathbb E_{o'_h\sim \mathcal{P}(\cdot \mid z_h,a_h)}
[ V_{h+1}^{\pi_k,\hat{\mathcal M},r + \tilde{b}_k}(z'_{h+1}) ]
\right).
\end{align}

Then we have
\begin{align} \label{eq:error_bounds}
% [MODIFIED]: Splitting \tilde{b} back into \widehat{b} and delay error
&V^{\pi^\ast,\mathcal M,r} - V^{\pi_k,\hat{\mathcal M}, r}
\nonumber \\& = \sum_{h=1}^{H}
\mathbb E_{(z_h,a_h)\sim d_h^{\pi_k,\mathcal P}}
\!\left[
\tilde b_h(z_h,a_h)
\right] \nonumber 
\\& + (2H+1)
\sum_{h=1}^{H}
\mathbb E_{(z_h,a_h)\sim d^{\pi_k,\mathcal P}}
\!\left[
f_h(z_h,a_h)
\right]
\nonumber \\& + \frac{c\alpha_k L}{\sqrt{k}}+ O(H^2\epsilon_1) + n \epsilon_{\mathrm{delay}}
\notag \\& \le
\sum_{h=1}^{H}
\mathbb E_{(z_h,a_h)\sim d^{\pi_k,\mathcal P}}
\!\left[ \widehat b_h(z_h,a_h) \right] + n \epsilon_{\mathrm{delay}} \nonumber
\\& + (2H+1)
\sum_{h=1}^{H}
\mathbb E_{(z_h,a_h)\sim d_h^{\pi_k,\mathcal P}}
\!\left[
f_h(z_h,a_h)
\right]
\notag\\
& +\frac{c\alpha_k L}{\sqrt{k}}+O(H^2\epsilon_1) + n \epsilon_{\mathrm{delay}}.
\end{align}

For the first term in \eqref{eq:error_bounds}, since it is now the ideal bonus $\widehat{b}_h$, we have
\begin{align}
&\sum_{h=1}^{H}\mathbb{E}_{(z_h,a_h)\sim d_h^{\pi_k,\mathcal P}}
\!\left[\widehat b_h(z_h,a_h)\right] \nonumber
\\&\le
\sum_{h=0}^{H}
\mathbb{E}_{(\tilde z,\tilde a)\sim d_{h-L}^{\pi_k,\mathcal P}}
\Bigl[
\|\phi^\ast_{h-L}(z,\tilde a)\|_{\Sigma^{-1}_{\gamma_{h-L}},\,\hat\phi^\ast_{h-L}} \nonumber 
\\& \sqrt{
k|A|^{L}\,
\mathbb E_{(z,a)\sim \rho_h}\!\left[(\widehat b_h(z,a))^2\right]
+ 4\lambda_k d + 4 k\epsilon_1
}
\Bigr] + 2H \epsilon_1,
\end{align}
where the inequality follows from Lemma~\ref{lem:L_step_true_model} associated with $\|\widehat b_h\|_\infty \le 1$.
% Note that we use the fact that $B=2$ when applying Lemma~\ref{lem:L_step_true_model}. 
In addition, we have that for any $h\in[H]$,
\begin{align}
&k\mathbb E_{(z,a)\sim \rho_h}
\left[
\|\hat\phi_h(z,a)\|_{\Sigma^{-1}_{\rho_h},\,\hat\phi_h}^{2}
\right] \nonumber
\\& = k\,\mathrm{Tr}\left(
\mathbb E_{\rho_h}[\hat\phi_h \hat\phi_h^\top]
\left[ k\mathbb E_{\rho_h}[\hat\phi_h \hat \phi^\top_h]+\lambda_k I
\right]^{-1}
\right)
\le d.
\end{align}

Then we have
\begin{align}
&\sum_{h=1}^{H}\mathbb{E}_{(z,a)\sim d_h^{\pi_k,\mathcal P}}
\!\left[\widehat b(z,a)\right] \nonumber
\\&\le
\sum_{h=1}^{H}
\mathbb{E}_{(\tilde z,\tilde a)\sim d_{h-L}^{\pi_k,\mathcal P}}
\!\left[
\|\phi_{h-L}^\ast(z,\tilde a)\|_{\Sigma^{-1}_{\rho_{h-L}},\,\hat\phi^\ast_{h-L}}
\right] \nonumber
\\&
\sqrt{|A|^{L}\alpha_k^{2}d + 4\lambda_k d + 4 k\epsilon_1} + 2H\epsilon_1.
\end{align}

Next, we bound the second term in~\eqref{eq:error_bounds}, with Lemma~\ref{lem:L_step_true_model} and $\|f_h(z,a)\|_\infty \le 1$, we have
\begin{align}
&\sum_{h=1}^{H}\mathbb{E}_{(z_h,a_h)\sim d_h^{\pi_k,\mathcal P}} \left[f_h(z_h,a_h)\right] \nonumber
\\& \le \sum_{h=1}^{H} \mathbb{E}_{(\tilde z,\tilde a)\sim d_{h-L}^{\pi_k,\mathcal P}}
\Bigl[
\|\phi^\ast_{h-L}(z,\tilde a)\|_{\Sigma^{-1}_{\gamma_{h-L}},\,\hat\phi^\ast_{h-L}}
\nonumber \\&
\sqrt{ k|A|^{L}
\mathbb E_{(z,a)\sim \rho_h}\!\left[f_h^2(z,a)\right] + 4\lambda_k d
+ 4k\epsilon_1
} \Bigr]
\nonumber \\& \le
\sum_{h=1}^{H}
\mathbb{E}_{(\tilde z,\tilde a)\sim d_{h-L}^{\pi_k,\mathcal P}}
\left[
\|\phi^\ast_{h-L}(z,\tilde a)\|_{\Sigma^{-1}_{\gamma_{h-L}},\,\hat\phi^\ast_{h-L}}
\right]
\nonumber \\& \sqrt{k|A|^{L}\zeta_k + 4\lambda_k d +4 k\epsilon_1},
\end{align}
where in the second inequality, we use 
\[
\mathbb E_{(z,a)\sim \rho_h}[f_h^2(z,a)] \le \zeta_k.
\]

Then we have
\begin{align}
&V^{\pi^\ast,\mathcal M,r} - V^{\pi_k, {\mathcal M},r}
\nonumber \\&\le
\sum_{h=1}^{H}
\mathbb E_{(z_h,a_h)\sim d_h^{\pi_k,\mathcal M}}
\!\left[\widehat b(z_h,a_h)\right]
\nonumber
\\&+
(2H+1)\sum_{h=1}^{H}
\mathbb E_{(z_h,a_h)\sim d_h^{\pi_k,\mathcal M}}
\!\left[f_h(z_h,a_h)\right]
\notag \\& +
\frac{c\alpha_k L}{\sqrt{k}}+
O(H^2\epsilon_1) + 2n \epsilon_{\mathrm{delay}}
\notag\\&\le
\sum_{h=1}^{H}
\mathbb E_{(\tilde z,\tilde a)\sim d_{h-L}^{\pi_k,\mathcal P}}
\!\left[\|\phi_{h-L}^\ast(\tilde z,\tilde a)\|_{\Sigma^{-1}_{\gamma_{h-L}},\hat\phi_{h-L}^\ast}
\right] \nonumber
\\&
\sqrt{|A|^{L}\alpha_k^{2}d + 4\lambda_k d + 4 k\epsilon_1}
\nonumber \\&+(2H+1) \sum_{h=1}^{H}
\mathbb E_{(\tilde z,\tilde a)\sim d_{h-L}^{\pi_k,\mathcal P}}
\!\left[\|\phi_{h-L}^\ast(\tilde z,\tilde a)\|_{\Sigma^{-1}_{\gamma_{h-L}},\hat\phi_{h-L}^\ast}
\right] \nonumber\\&
\sqrt{k|A|^{L}\zeta_k + 4\lambda_k d +4 k\epsilon_1}
+\frac{c\alpha_k L}{\sqrt{k}}+
O(H^2\epsilon_1) + 2n \epsilon_{\mathrm{delay}}.
\end{align}

Next, we bound the first two terms.
Note that $\gamma_h^k(z,a)=\frac{1}{k}\sum_{j=0}^{k-1} d_h^{\pi_j}(z,a),$ then
\begin{align}
&    \sum_{k=1}^{K}
\mathbb E_{(\tilde z,\tilde a)\sim d_h^{\pi_k,\mathcal P}}
\!\left[ \phi^\ast_h(\tilde z,\tilde a)^{\top}
\Sigma^{-1}_{\gamma_h^k} \phi^\ast_h(\tilde z,\tilde a)
\right] 
\nonumber
\\& 
\le   (
\log\det (
\sum_{k=1}^{K}
\mathbb E_{(\tilde z,\tilde a)\sim d_h^{\pi_k,\mathcal P}}
\!\left[
\phi_h^\ast(\tilde z,\tilde a)\phi_h^\ast(\tilde z,\tilde a)^{\top}
\right] ) \nonumber
\\&-\log\det(\lambda I)
) 
 \notag
\\& \le
d\log\left(1+\frac{K}{d\lambda_1}\right),
\end{align}
% \begin{align}
% &\sum_{k=1}^{K}
% \mathbb E_{(\tilde z,\tilde a)\sim d_h^{\pi_k,\mathcal P}}
% \!\left[
% \|\phi^\ast_h(\tilde z,\tilde a)\|_{\Sigma^{-1}_{\gamma_h^k},\,\phi^\ast_h}
% \right] \nonumber
% \\& \le
% \sqrt{  (  K \sum_{k=1}^{K}
% \mathbb E_{(\tilde z,\tilde a)\sim d_h^{\pi_k,\mathcal P}}
% \!\left[ \phi^\ast_h(\tilde z,\tilde a)^{\top}
% \Sigma^{-1}_{\gamma_h^k} \phi^\ast_h(\tilde z,\tilde a)
% \right] ) }
% \nonumber
% \\& 
% \le  \Bigl( K (
% \log\det (
% \sum_{k=1}^{K}
% \mathbb E_{(\tilde z,\tilde a)\sim d_h^{\pi_k,\mathcal P}}
% \!\left[
% \phi_h^\ast(\tilde z,\tilde a)\phi_h^\ast(\tilde z,\tilde a)^{\top}
% \right] ) \nonumber
% \\&-\log\det(\lambda I)
% ) \Bigr)^{1/2}
%  \notag
% \\& \le\sqrt{
% dK\log\!\left(1+\frac{K}{d\lambda_1}\right)
% },
% \end{align}
where the first inequality is by Lemma~\ref{lem:semidefinite_matrix}, and the second inequality is by Lemma~\ref{lem:matrix_eigenvalue}.
 Then using  the Cauchy-Schwarz inequality,
\begin{align}
&\sum_{k=1}^K\sum_{h=1}^{H}
\mathbb E_{(\tilde z,\tilde a)\sim d_{h-L}^{\pi_k,\mathcal P}}
\!\left[\|\phi_{h-L}^\ast(\tilde z,\tilde a)\|_{\Sigma^{-1}_{\gamma_{h-L}},\hat\phi_{h-L}^\ast}
\right] \nonumber
\\& 
\sqrt{ |A|^{L}\alpha_k^{2}d + 4\lambda_k d + 4 k\epsilon_1} \nonumber
\\& \leq H \sqrt{ \bigl(\sum_{k=1}^K (|A|^{L}\alpha_k^{2}d + 4\lambda_k d + 4 k\epsilon_1) \bigr) d\log\left(1+\frac{K}{d\lambda_1}\right) }.
\end{align}
Given the parameters in~\eqref{eq:parameters}, we obtain that
\begin{align}
&\bigl(\sum_{k=1}^K \sum_{h=1}^{H}(|A|^{L}\alpha_k^{2}d + 4\lambda_k d + 4 k\epsilon_1) \bigr)d\log\left(1+\frac{K}{d\lambda_1}\right)
\nonumber
\\& \le
O\bigl(H (( \frac{|A|^{2L}d^2}{H-n} + |A|^L d^4 + d^3 )
\nonumber\\&
K\log(dK(H-n)|\mathcal F|/\delta)
\log (1+\frac{K}{d\lambda_1} )
)^{\frac{1}{2}} \bigr).
\end{align}
 
Combining all of the above relations and using~\eqref{eq:bouns_error}, let $\Lambda_K=\log\left(\frac{dK(H-n)|\mathcal F|}{\delta}\right)$, we have 
\begin{align}
&\sum_{k=1}^{K}
( V^{\pi^\ast,\mathcal M,r}-V^{\pi_k,\mathcal M,r} )
% \nonumber \\&\le
% O\bigl(H^2 (( \frac{|A|^{2L}d^2}{H-n} + |A|^L d^4 + d^3 )
% \nonumber\\&
% K\log(dK(H-n)|\mathcal F|/\delta)
% \log (1+\frac{K}{d\lambda_1} )
% )^{\frac{1}{2}} \bigr)
%  + O(H^2 K\epsilon_1) \nonumber
% \\&+ \sum_{k=1}^K 2n\alpha_k \sqrt{ \frac{2L_\phi L_\psi C_b}{\lambda_k}
% + \frac{2k L_\phi^3L_\psi C_b}{\lambda_k^2}
% + \frac{L_\phi^2 C_D}{\lambda_k^2}
% }  +\frac{cK \alpha_k L}{\sqrt{k}}
% \nonumber
% \\& + \sum_{k=1}^K\frac{c \alpha_k L}{\sqrt{k}}
\nonumber  \\&
\le O\bigl(H^2 (( \frac{|A|^{2L}d^2}{H-n} + |A|^L d^4 )
\nonumber\\&
K\log(dK(H-n)|\mathcal F|/\delta)
\log (1+\frac{K}{d\lambda_1} )
)^{\frac{1}{2}} \bigr)
 + O(H^2 K\epsilon_1) \nonumber \\&
 + \sum_{k=1}^K 2n\alpha_k
\sqrt{\frac{2L_\phi L_\psi C_b+2L_\phi^2}{\lambda_k}
} + \sum_{k=1}^K\frac{c \alpha_k L}{\sqrt{k}} \nonumber
\\& \leq O\Bigg(
H^2\sqrt{ \left( \frac{|A|^{2L}d^2}{H-n} + |A|^L d^4 \right)
K\Lambda_K \log K
}\Bigg)
\nonumber\\
&\quad+ O\Bigg( nK \sqrt{ \frac{|A|^L}{d(H-n)}+d }\Bigg)
\nonumber\\
&\quad+ O\!\left(
L \sqrt{ (\frac{|A|^L}{H-n}+d^2 )K\Lambda_K}\right)+ O(H^2 K\epsilon_1).
\end{align}

%%%%%%%%%%%%%%%%%%%%%%%%%%%%%%%%%%%%%%%%%%%%%%%%%%%%%%%%%%%%%%%%%
Then, based on \eqref{eq:V_P_M_deviation}, it can be concluded that with probability $1-\delta$,
\begin{align} \label{eq:V_error}
&\sum_{k=1}^K ( V^{\pi^*, {P}, r} - V^{\pi^k, {P}, r}  )
\nonumber 
\\& \leq O\Big( H^2\sqrt{( \frac{|A|^{2L}d^2}{H-n} + |A|^L d^4) K\log(\frac{dK(H-n)|\mathcal F|}{\delta}) \log K }\Big)
\nonumber\\
&
+ O\Big( L \sqrt{ (\frac{|A|^L}{H-n}+d^2 )K\log(\frac{dK(H-n)|\mathcal F|}{\delta})}\Big)
\nonumber\\
&+ O\Big( nK \sqrt{ \frac{|A|^L}{d(H-n)}+d }\Big)+ O(H^2 K\epsilon_1),
\end{align}
substituting the definition of $\epsilon_1$ and dividing \eqref{eq:V_error} by $K$, the conclusion follows.

\bibliographystyle{IEEEtran}
\bibliography{reference,IDS}

\end{document}